\def\longrightharpoonup{\relbar\joinrel\rightharpoonup}
\DeclareMathAlphabet{\mathboit}{OT1}{cmr}{bx}{it}
\numberwithin{equation}{section}
\numberwithin{table}{section}
\newtheorem{Lemma}[equation]{Lemma}
\newtheorem{Theorem}[equation]{Theorem}
\newtheorem{Proposition}[equation]{Proposition}
\theoremstyle{definition}
\newtheorem{Remark}[equation]{Remark}
\newtheorem{Definition}[equation]{Definition}
\title{\textbf{Hexagonal patterns in a simplified model for block copolymers}}
\author{D.~P.~Bourne$^1$, M.~A.~Peletier$^2$, S.~M.~Roper$^1$}
\date{\today}
\begin{document}
\maketitle

\begin{abstract}
In this paper we study a new model for patterns in two dimensions, inspired by diblock copolymer melts with a dominant phase. The model is simple enough to be amenable not only to numerics but also to analysis, yet sophisticated enough to reproduce hexagonally packed structures that resemble the cylinder patterns observed in block copolymer experiments.

 Starting from a sharp-interface continuum model, a nonlocal energy functional involving a Wasserstein cost, we derive the new model using Gamma-convergence in a limit where the volume fraction of one phase tends to zero. The limit energy is defined on atomic measures; in three dimensions the atoms represent small spherical blobs of the minority phase, in two dimensions they represent thin cylinders of the minority phase.

 We then study minimisers of the limit energy. Numerical minimisation is performed in two dimensions by recasting the problem as a computational geometry problem involving power diagrams. The numerical results suggest that the small particles of the minority phase tend to arrange themselves on a triangular lattice as the number of particles goes to infinity. This is proved in the companion paper \cite{BournePeletierTheil} and agrees with patterns observed in block copolymer experiments.
  This is a rare example of a nonlocal energy-driven pattern formation problem in two dimensions where it can be proved that the optimal pattern is periodic.

  \end{abstract}

%
%
%
%

\section{Introduction}
\label{Intro}

\footnotetext[1]{School of Mathematics and Statistics,
University of Glasgow,
15 Univ.~Gardens,
Glasgow G12 8QW, UK.}

\footnotetext[2]{Department of Mathematics and Computer Science and Institute for Complex Molecular Systems, Technische Universiteit Eindhoven,
PO Box 513, 5600 MB Eindhoven,
The Netherlands.}

Block copolymers are a famous example of a pattern-forming system and even appear in the popular-science literature \cite{Ball}.
A diblock copolymer molecule consists of a polymer chain of type A bonded covalently to another polymer chain of type B. Because of a repulsive force between the A and B chains, in diblock copolymer mixtures the A and B parts separate at the microscale to produce a wide variety of patterns including lamellae, cylinders, gyroids, and spheres.
See Figure~\ref{fig:CPW}.
\begin{figure}[h]
\def\fh{2.5cm}
\centering
\includegraphics[height=\fh]{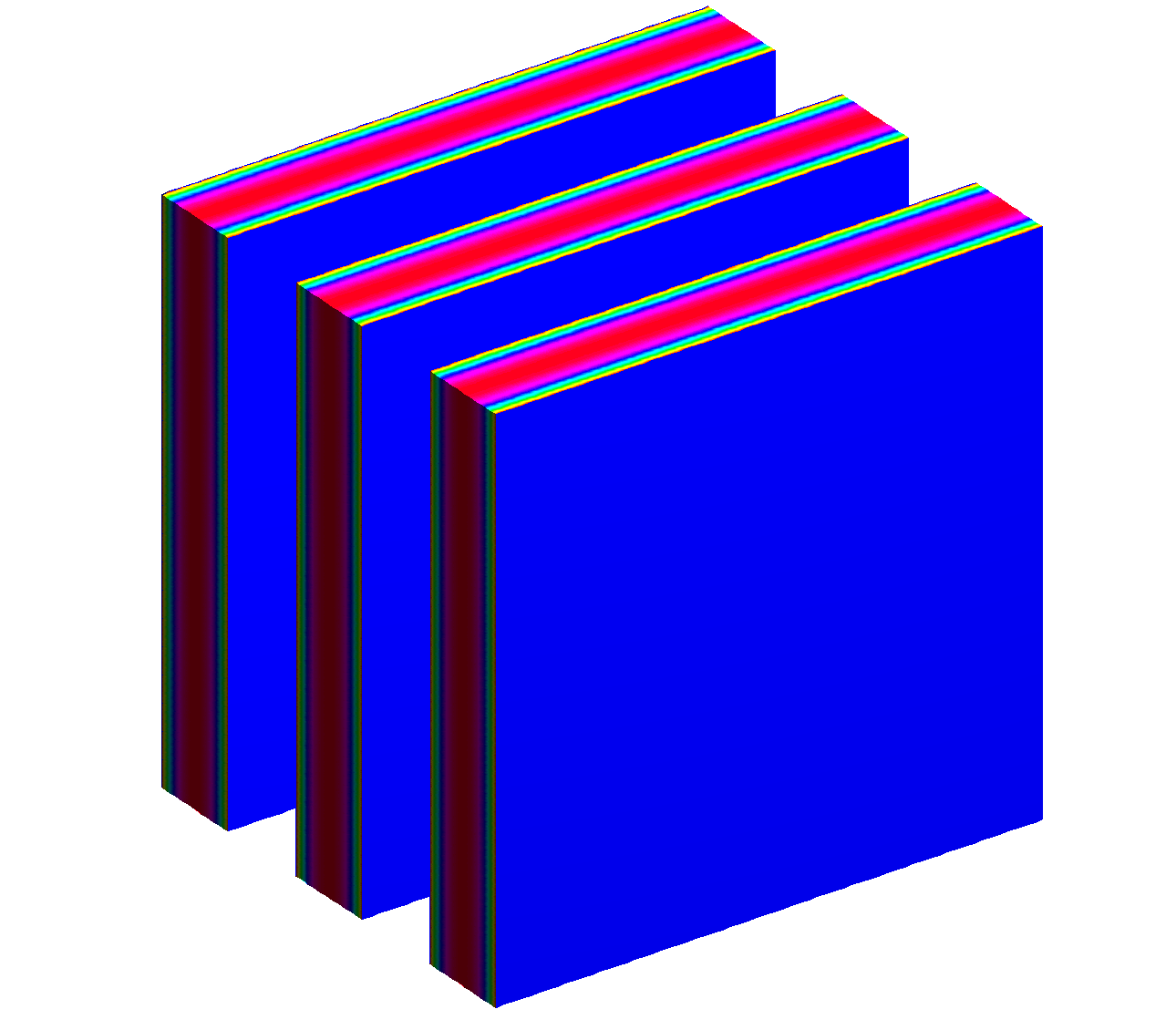}
\includegraphics[height=\fh]{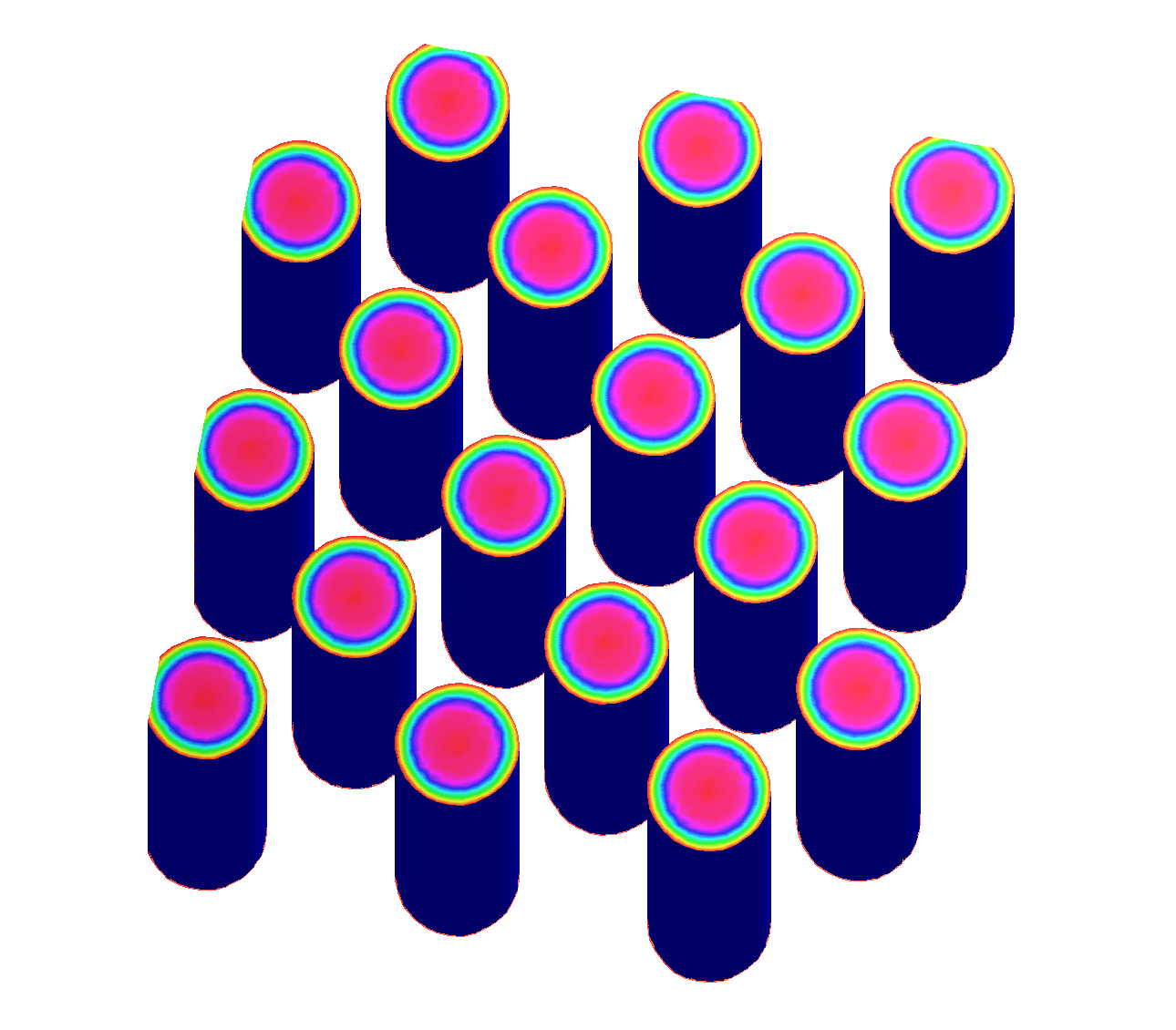}
\includegraphics[height=\fh]{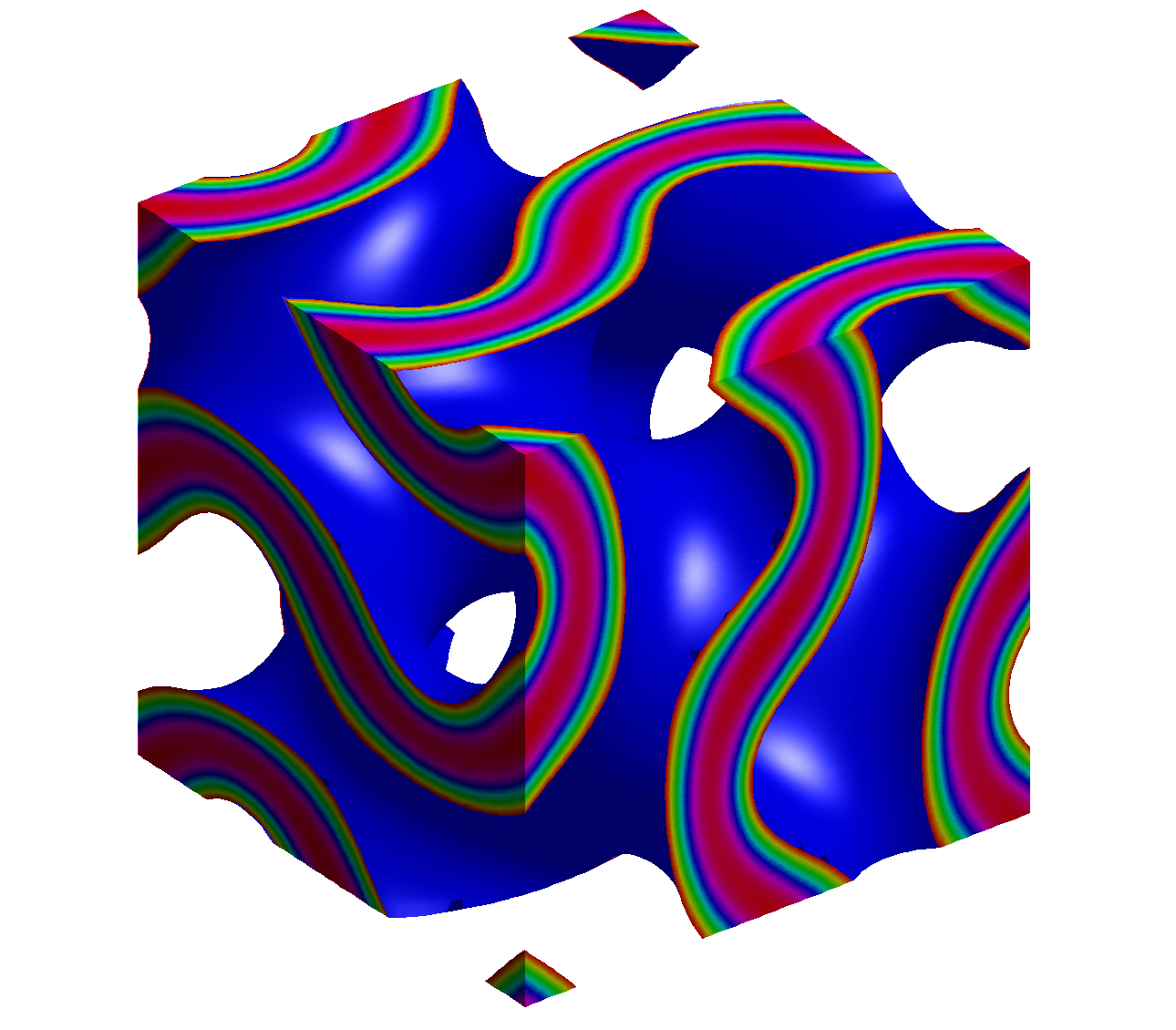}
\includegraphics[height=\fh]{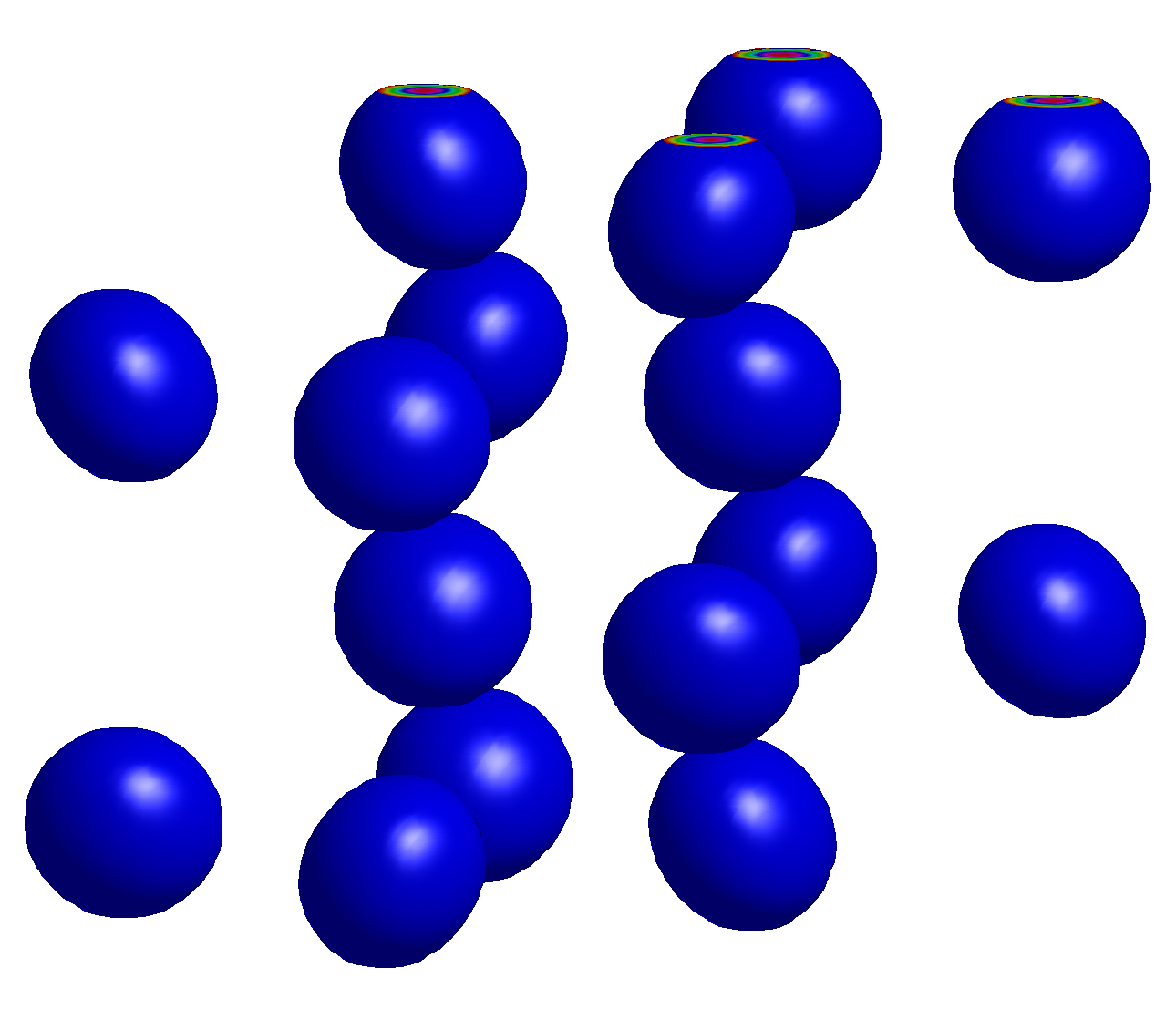}
\caption{Some examples of three-dimensional patterns arising in models of block copolymer melts. These figures have been obtained in~\cite{ChoksiPeletierWilliams09} by numerical minimisation of the Ohta-Kawasaki energy functional, a functional that is closely related to the models of this paper.}\label{fig:CPW}
\end{figure}

This spontaneous pattern formation occurs because the A and B blocks repel each other and so try to arrange themselves to be as far away from each other as possible under the constraint that each A block is bonded covalently to a B block. Apart from being beautiful examples of a pattern forming system, block copolymers
have the potential to be used as microscale structuring agents to develop new materials with prescribed macroscale properties~\cite{BatesFredrickson}.

In this paper we derive a new, idealised model for diblock copolymers. The model is applicable in the parameter regime where one phase has a large volume fraction and where the repulsive force between the A and the B blocks is large. The starting point in the derivation is the following sharp-interface continuum model.

\paragraph{The starting point: a sharp-interface continuum model.}
Let $\Omega$ be the domain occupied by the diblock copolymer melt, which we take to be an open, connected and bounded subset of $\mathbb{R}^d$, $d \ge 2$, with Lipschitz boundary.
 Let $u \in BV(\Omega;\{0,1\})$ be a phase indicator, where the
support of $u$ represents the region occupied by the A blocks, and the support of $1-u$
represents the region occupied by the B blocks. Let $m \in (0,1)$ be the mass fraction of A:
\[
m = \frac{1}{|\Omega|}\int_{\Omega} u(x) \, dx.
\]
This represents the average volume of the A blocks compared to the B blocks.
We assume that  $u = 0 \textrm{ on } \partial \Omega$, implying that the boundary prefers to be in contact with the B phase.
Let $p \in [1,\infty)$.
We assign the following energy to $u$:
\begin{equation}
\label{M.1}
E_{p,m}(u) =  m^{\frac{1-d}{d}} \int_{\Omega} | \nabla u | + W_p \left( \frac{1-u}{1-m} , \frac{u}{m} \right)
\end{equation}
where $W_p$ is the $p$-Wasserstein transport cost, which is defined as follows:
\[
W_p(\mu,\nu) = \inf \left\{ \int_\Omega |x - T(x)|^p \, d\mu \; : \; T:\Omega \to \Omega, \; T_\# \mu  = \nu \right\}
\]
for measures $\mu$, $\nu$ such that $\mu(\Omega) = \nu(\Omega)= |\Omega|$ and $\mu$ is absolutely continuous with respect to the Lebesgue measure.
Note that $W_p$ is the $p$-th power of the $p$-Wasserstein metric (e.g. \cite{Villani03}).

The energy \eqref{M.1} was derived in \cite{PeletierVeneroni10}. In this model a polymer is represented as two spheres connected by a bond, and a free energy is postulated that takes into account entropy, the bond energy, repulsion between the A and B spheres, and volume exclusion. The bond is modeled as a spring with energy $|e|^p$ in terms of the spring extension $e$; $p=2$ therefore corresponds to a linear spring. The A-B repulsion and the volume exclusion are represented by interaction with an external field that is self-consistently generated by the spheres themselves, i.e.\ in a mean-field manner. After taking a strong-segregation limit one finds~\eqref{M.1}.

The two terms in $E_{p,m}$ in~\eqref{M.1} can both be traced directly back to the modelling ingredients. The first term arises from the tradeoff between entropy and A-B repulsion, and prefers phase separation (since it penalises the perimeter of the support of $u$). The second term is the total energy in the A-B bonds and prefers phase mixing. The competition between the two terms determines the pattern.

\medskip

This model is strongly related to the well-studied Ohta-Kawasaki model~\cite{OhtaKawasaki86,BatesFredrickson90,FredricksonBates96,ChoksiRen03}. Although the Ohta-Kawasaki model is derived and often studied for the regime of diffuse A-B interfaces, much of the analytical work is on the simpler sharp-interface version, and many of the sharp-interface results carry over to the diffuse-interface case (see e.g.~\cite{ChoksiPeletier10,ChoksiPeletier11,Muratov10} for examples of this).

The main distinction between the Ohta-Kawasaki model and~\eqref{M.1} therefore lies in the form of the non-local term. The Wasserstein-$p$ distances $W_p$ are conceptually similar to the $W^{-1,2}$ Sobolev norm in the Ohta-Kawasaki model. One way of making this similarity explicit is by observing that when only considering bounded functions on bounded sets, convergence in the $W_p$ Wasserstein distances and in the Sobolev $W^{-1,s}$ norms is equivalent to each other and to weak  convergence in $L^q$.

A second relation is
shown in Figure~\ref{fig:SobolevWasserstein}. The $1$-Wasserstein distance is also the norm in $W^{-1,1}$, which we define here as the Sobolev norm dual to the $W^{1,\infty}$ seminorm: $W_1(\mu_1,\mu_2) = \sup\{ \int \varphi\, d(\mu_1-\mu_2): \|\nabla \varphi\|_\infty \leq 1\} =: \|\mu_1-\mu_2\|_{W^{-1,1}}$ (see e.g.~\cite[Section~1.2]{Villani03}). For increasing $p$, the Wasserstein-$p$ distances increase in strength: $W_{p'}(\mu_1,\mu_2)\geq W_p(\mu_1,\mu_2)$ if $p'\geq p$. At the same time, the $W^{-1,s}$-norm becomes weaker as~$s$ increases.

\begin{figure}[htb]
\centering
\includegraphics[height=3cm]{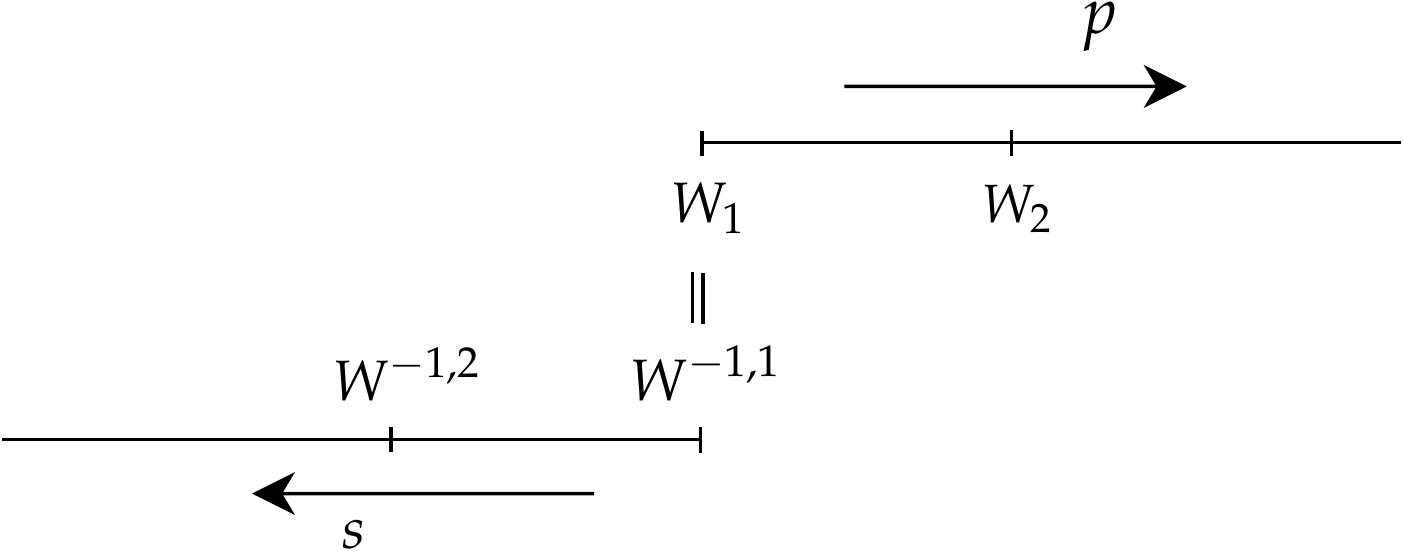}
\caption{A comparison of Wasserstein and Sobolev nonlocal terms; see the text for discussion.}
\label{fig:SobolevWasserstein}
\end{figure}
These arguments show how the Wasserstein nonlocal terms can be considered related to the Sobolev norms, and the model we study here is related to the Ohta-Kawasaki case of $W^{-1,2}$.

\paragraph{The vanishing-volume-fraction limit.}

We will consider the limit $m \to 0$, i.e., the limit in which phase A has vanishing volume fraction. The energy \eqref{M.1} is scaled in such a way that we see a finite number of particles of A in the limit. Similar limits have been studied for the Ohta-Kawasaki model~\cite{RenWei07,ChoksiPeletier10,ChoksiPeletier11,Muratov10,Niethammer,GlasnerChoksi,CicaleseSpadaro}.

It is convenient to rewrite the energy in the following way: Define $v := \frac um$, $n := \frac 1m$, and
$F_{p,n}(v) := E_{p,{1/n}}(v/n)$ so that
\begin{equation}
\label{M.3}
F_{p,n}(v) = \left\{
\begin{array}{ll}
\displaystyle n^{-\frac{1}{d}} \int_{\Omega} | \nabla v | + W_p \left( \frac{n-v}{n-1} , v \right)
& \textrm{if } v \in K_n, \\
+ \infty & \textrm{otherwise,}
\end{array}
\right.
\end{equation}
where
\begin{equation}
K_n := \left\{ v \in BV(\Omega;\{0,n\}): \frac{1}{|\Omega|}\int_{\Omega} v(x) \, dx = 1, \quad v = 0 \textrm{ on } \partial \Omega \right\}.
\end{equation}
This energy is the starting point for our analysis.

\paragraph{The new model.} By taking the $\Gamma$-limit of $F_{p,n}$ as $n \to \infty$ we obtain the following model. Recall that sending $n \to \infty$ corresponds to sending the volume fraction of phase A to zero, in which case
 phase A can be represented by a measure of the form
 \[
 \nu = \sum_{i=1}^M m_i \delta_{x_i} \quad \textrm{with} \quad  \sum_{i=1}^M m_i = | \Omega |.
 \]
 In three dimensions the points $x_i \in \Omega$ represent small spherical blobs of phase A in a sea of phase B, and the weights $m_i>0$ represent the relative size of the blobs. In two dimensions the points $x_i$ represent thin cylinders of phase A in a cross-section orthogonal to the cylinders. In
Theorem \ref{Gamma} we show that $F_{p,n}$ Gamma-converges to an energy of the form
\begin{equation}
\label{I.1}
F_p(\nu) = \lambda \sum_i m_i^\frac{d-1}{d} + W_p(1,\nu)
\end{equation}
where $\lambda > 0$ and the $1$ in $W_p(1,\nu)$ denotes the Lebesgue measure on $\Omega$.
Note that the number $M$ in the definition of $\nu$ is not prescribed and is an unknown of the problem.
The first term of $F_p$ represents the repulsion between the A and B blocks and is minimised when $M=1$, which corresponds to complete phase separation where there is just one particle of phase A. The second term of $F_p$ represents the covalent bonds between the A and B blocks and can be
made arbitrarily small by taking $M \to \infty$, corresponding to complete phase mixing, i.e., infinitely many particles of A equidistributed in B.
The competition between the two terms and the parameter $\lambda$ determines the nature of the minimisers.

\paragraph{Minimisers of the limit energy.}
From Section \ref{E} onwards we study minimisers of the limit energy $F_p$.

\medskip

\emph{Numerical Minimisation.}
Minimising $F_p$ numerically is challenging since (a) $F_p$ has infinitely many local minimisers,  and (b) it is difficult to evaluate numerically; evaluating the $p$-Wasserstein distance is equivalent to solving an infinite-dimensional linear programming problem. The second difficulty is addressed by using a deep a connection between the $p$-Wasserstein cost and generalised Voronoi diagrams (Proposition \ref{char}). This connection, made in \cite{Merigot11} for the case $p=2$, seems to be little known in the theoretical optimal transportation community. It allows the minimisation of $F_p$ to be reformulated as a computational geometry problem. By combining this formulation with Euler-Lagrange equations for $F_p$ (Theorem \ref{GlobalMin}), we derive an algorithm for finding stationary points of $F_p$ (Section \ref{CPD}). This algorithm is a generalisation of Lloyd's algorithm, which is a popular method for computing Centroidal Voronoi Tessellations (see Section \ref{lambda=0}).

For implementation purposes we limit our attention to the case $p=2$ in two dimensions.
Figure \ref{Minimisers} shows a number of minimisers $\nu$ of $F_2$ that were computed using our generalised Lloyd algorithm. The points represent the support of $\nu$ and the polygons represent the corresponding transport regions (the regions transported onto the support of $\nu$ by the optimal transport map $T$ for $W_2(1,\nu)$). We observe that, as $\lambda \to 0$, the support of $\nu$ tends to a triangular lattice and the transport regions tend to a hexagonal tiling. This agrees with the hexagonally packed cylinder patterns observed experimentally (see e.g.~\cite{BatesFredrickson}).

\begin{figure}[!h]
\begin{center}
\includegraphics[width=0.75\textwidth]{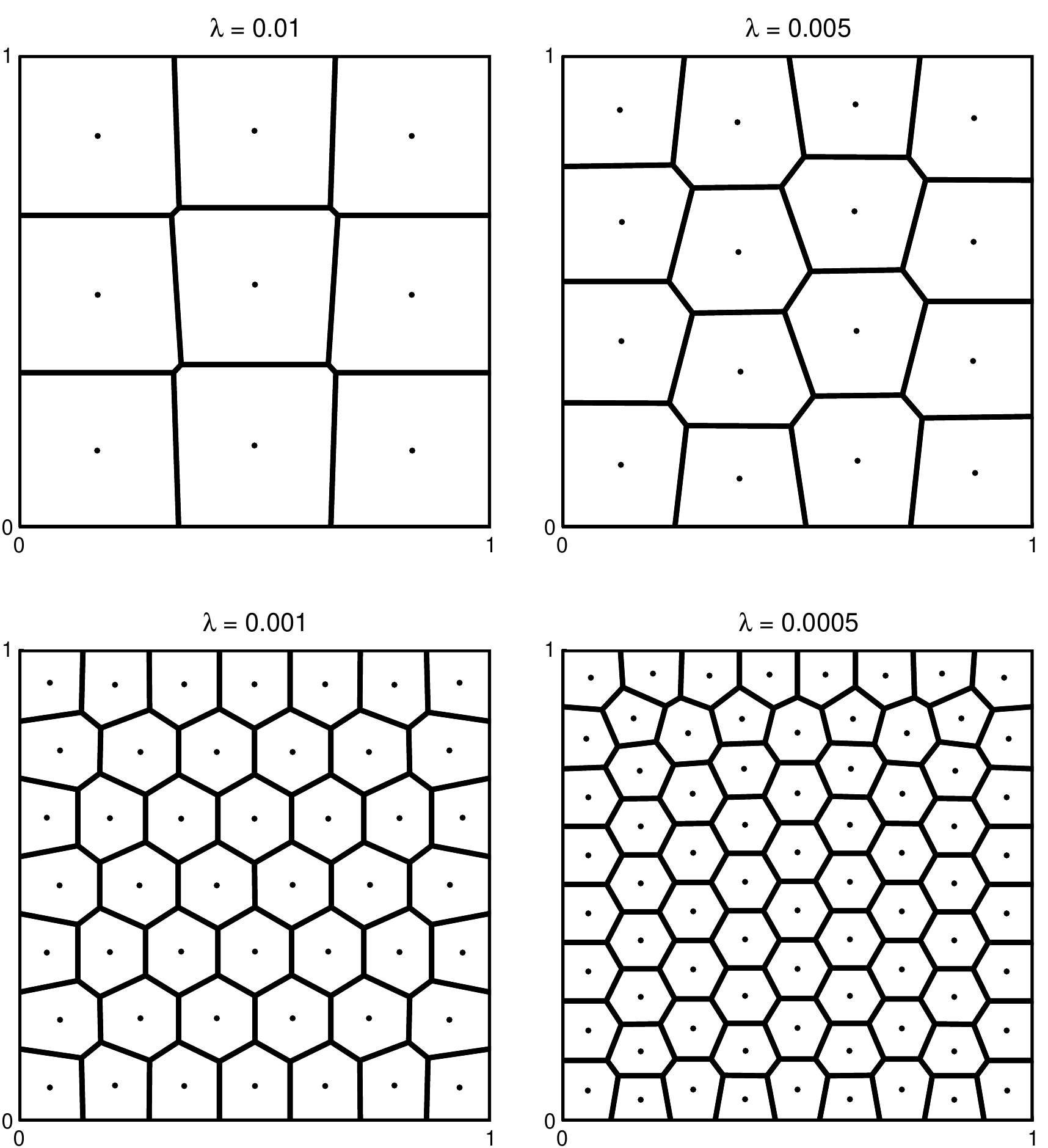}
\caption{\label{Minimisers} {Minimisers of $F_2$ (defined in equation \eqref{N.1}) in two dimensions.
We see that as $\lambda \to 0$ a hexagonal pattern appears. $F_2$ was minimised using a generalisation of Lloyd's algorithm, which is described in Section \ref{CPD} .}}
\end{center}
\end{figure}

\medskip

\emph{Rigorous characterisation of the pattern. } Figure~\ref{Minimisers}, and many other figures like it, strongly  suggest that the triangular lattice is optimal in some way, and that minimisers tend to approximate this optimal lattice, `as far as the boundary allows'.
This statement of \emph{crystallisation} we have formulated in an exact way, and proved with Florian Theil. A non-rigorous version is given in Section \ref{P}, and the full version and the proof are given in the companion paper \cite{BournePeletierTheil}.

\medskip

\paragraph{Related work.}
As we described above, the energy~\eqref{M.1} is related to the Ohta-Kawasaki energy, which is a nonlocal Cahn-Hilliard energy, in which the nonlocality is a negative Sobolev norm rather than a Wasserstein distance. This energy has been extensively studied, with a variety of rigorous and non-rigorous results. However, to our knowledge exact crystallisation results of the form \cite{BournePeletierTheil} do not exist; this appears to be the first time that it has been proved that multi-dimensional minimisers of an energy of  diblock copolymer type are periodic. Until now the closest results were the weak periodicity results of \cite{AlbertiChoksiOtto09} and \cite{Spadaro}. Note, however, that the diblock copolymer models analysed in these papers are different from and more sophisticated than the limit model we study here. It has recently been proved \cite{MoriniSternberg} that stripe patterns are optimal for the sharp interface limit of the Ohta-Kawasaki energy for thin two-dimensional domains.

In three dimensions we have no rigorous results, and we know of none either; however, we conjecture that minimisers of $F_2$ in three dimensions form small spheres centred on a body-centred cubic lattice (see Section \ref{3D}). This is also observed in experiments. Verifying this conjecture numerically will be the subject of a future paper.

This paper (along with the companion paper \cite{BournePeletierTheil}) gives an example of an energy-driven pattern formation problem in two dimensions where it can be proved that the optimal pattern is periodic. Some previous results for problems arising in materials science include \cite{Radin81}, \cite{Theil06} and \cite{YeungFrieseckeSchmidt12}. Proofs in three dimensions are even rarer  \cite{HalesHarrisonMcLaughlinNipkowObuaZumkeller10}.
Also, while the hexagonal pattern appears in a wide range of situations, again there are few rigorous results \cite{Fejes-Toth72,Gruber99,G-Fejes-Toth01,MorganBolton02}.

The energy \eqref{I.1} and the results in this paper have applications beyond diblock copolymers. Energies of the form of our limit energy \eqref{I.1} arise in optimal location problems \cite{LoschWoglomStolper54,ButtazzoSantambrogio,BouchitteJimenezMahadevan}, quantization and image processing \cite{Gersho79,Merigot11}
and are related to many other problems in computational geometry \cite{DuFaberGunzburger99}.

\paragraph{Outline of the paper.} In Section \ref{C} the discrete limit energy \eqref{I.1} is derived from the continuum energy \eqref{M.3} using Gamma-convergence. Euler-Lagrange equations for the limit energy are derived in Section \ref{E}.
In Section~\ref{N} we derive an algorithm for computing stationary points of the limit energy and implement it in two dimensions for the case $p=2$.
Section~\ref{P} reports results from the companion paper~\cite{BournePeletierTheil}, where minimisers are characterised analytically.
Finally, in Section \ref{3D} we make a conjecture about minimisers in three dimensions.
%
%
%
%
%
%
%
%
\section{The Small Volume Fraction Limit Model}
\label{C}
In this section we find the $\Gamma$-limit of $F_{p,n}$ as $n \to \infty$, i.e., as the volume fraction of phase~A tends to zero. By doing this we obtain a simpler model that is more amenable to numerics and analysis.
\begin{Theorem}[Compactness]
\label{Comp}
Let $\{ v_n \} \subset K_n$ be a sequence with bounded energy:
\begin{equation}
\nonumber
\sup_{n>0} F_{p,n}(v_n) \le C < \infty.
\end{equation}
Then $v_n dx \stackrel{*}{\longrightharpoonup} \nu$, where
\begin{equation}
\nonumber
\nu = \sum_i m_i \delta_{x_i}, \quad m_i \ge 0
\end{equation}
is an at most countable sum of dirac masses located at points $x_i \in \overline{\Omega}$. Moreover
\begin{equation}
\nonumber
\sum_{i} m_i = |\Omega|, \quad \quad \sum_{i} m_i^{\frac{d-1}{d}} < \infty.
\end{equation}
\end{Theorem}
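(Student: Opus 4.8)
The plan is to work only with the perimeter part of the energy, since $W_p \ge 0$ means $n^{-1/d}\int_\Omega |\nabla v_n| \le F_{p,n}(v_n) \le C$. First I would write $v_n = n\mathbf{1}_{A_n}$ with $A_n := \{v_n = n\} \subset \Omega$; the mass constraint gives $|A_n| = |\Omega|/n$, so the nonnegative measures $\mu_n := v_n\,dx$ all have total mass $|\Omega|$ on the compact set $\overline\Omega$. By Banach--Alaoglu (Riesz representation) a subsequence satisfies $\mu_n \stackrel{*}{\rightharpoonup} \nu$ for some nonnegative Radon measure $\nu$ on $\overline\Omega$; testing against the constant function $1$ gives $\nu(\overline\Omega) = |\Omega|$, which will yield $\sum_i m_i = |\Omega|$ once atomicity is known.

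Next I would extract the central uniform estimate. Since $\int_\Omega |\nabla v_n| = n\,P(A_n)$ (perimeter of $A_n$), the energy bound reads $n^{(d-1)/d} P(A_n) \le C$. I decompose $A_n$ into its at most countably many indecomposable components $A_n^j$ (the theory of sets of finite perimeter, e.g.\ Ambrosio--Caselles--Masnou--Morel) and set $m_n^j := n|A_n^j|$, so that $\sum_j m_n^j = |\Omega|$. The isoperimetric inequality $P(A_n^j) \ge c_d |A_n^j|^{(d-1)/d} = c_d\, n^{-(d-1)/d}(m_n^j)^{(d-1)/d}$, summed over $j$ and combined with the energy bound, produces the key estimate $\sum_j (m_n^j)^{(d-1)/d} \le C/c_d =: C'$, uniformly in $n$. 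Because $(d-1)/d < 1$, this is an $\ell^q$-type bound with $q<1$, and it has two consequences: for any $\delta>0$ there are at most $C'\delta^{-(d-1)/d}$ components of mass $m_n^j \ge \delta$, while the mass carried by the components with $m_n^j < \delta$ satisfies $\sum_{m_n^j<\delta} m_n^j \le \delta^{1/d}\sum_j (m_n^j)^{(d-1)/d} \le C'\delta^{1/d}$, i.e.\ it is uniformly small (tightness, no mass lost to the fine scale).

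The hard part will be showing that every component of non-negligible mass concentrates to a point, i.e.\ $\operatorname{diam}(A_n^j) \to 0$. This is the main obstacle because in dimension $d \ge 3$ the perimeter bound alone does \emph{not} control the diameter: a thin tube has both small volume and small perimeter yet macroscopic diameter, so one cannot conclude from $P(A_n^j)\to 0$ that the component shrinks. The remedy is to combine the perimeter bound with the volume lower bound $|A_n^j| \ge \delta/n$ valid for the large components, through a diameter estimate of the form $\operatorname{diam}(E) \le C_d\, P(E)^{d-1}/|E|^{d-2}$ for indecomposable $E$ (which I would prove by slicing $E$ orthogonally to a near-diameter direction and applying the $(d-1)$-dimensional isoperimetric inequality to the slices; in $d=2$ this degenerates to the elementary $\operatorname{diam}(E) \le \tfrac12 P(E)$ and needs no volume information). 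Inserting $P(A_n^j) \le C n^{-(d-1)/d}$ and $|A_n^j| \ge \delta/n$, the exponent of $n$ works out to be exactly $-1/d$, giving $\operatorname{diam}(A_n^j) \le C''\delta^{-(d-2)} n^{-1/d} \to 0$; it is precisely the scaling $n^{-1/d}$ in the energy that forces this.

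Finally I would pass to the limit. Fixing $\delta$, the at most $C'\delta^{-(d-1)/d}$ components of mass $\ge\delta$ have masses and barycenters converging along a subsequence, and since their diameters vanish the corresponding part of $\mu_n$ converges weak-$*$ to a finite sum of Dirac masses. Sending $\delta\to 0$ along a sequence and diagonalising, the leftover sub-$\delta$ mass vanishes by the tightness estimate, so $\nu = \sum_i m_i \delta_{x_i}$ with $x_i \in \overline\Omega$ and at most countably many atoms. The bound $\sum_i m_i^{(d-1)/d} \le C'$ then follows by lower semicontinuity (Fatou applied to the finite sums over large components, using that the per-$n$ bound $C'$ is uniform), and $\sum_i m_i = \nu(\overline\Omega) = |\Omega|$ from mass preservation. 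Everything here is standard measure theory and geometric measure theory; the genuine difficulty is isolated in the concentration step of the previous paragraph.
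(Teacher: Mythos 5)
Your reduction to the perimeter term, the decomposition into indecomposable components, the uniform bound $\sum_j (m_n^j)^{(d-1)/d}\le C'$ via the isoperimetric inequality, and the tightness estimate for the sub-$\delta$ mass are all correct, and for $d=2$ your argument goes through (it is essentially the elementary proof the paper itself records in the remark following the theorem). The gap is exactly where you locate the difficulty: the diameter estimate $\operatorname{diam}(E)\le C_d\,P(E)^{d-1}/|E|^{d-2}$ for indecomposable sets is \emph{false} when $d\ge 3$. Take a dumbbell in $\mathbb R^3$: two unit balls joined by a tube of radius $\varepsilon$ and length $L=1/\varepsilon$. This set is indecomposable, its perimeter and volume stay bounded (indeed $P\to 10\pi$, $|E|\to \tfrac83\pi$) as $\varepsilon\to0$, yet its diameter blows up; since the inequality you propose is scale invariant (both sides scale like $s$ under $x\mapsto sx$), the counterexample persists at every scale, in particular in the regime $P(A_n^j)\lesssim n^{-(d-1)/d}$, $|A_n^j|\ge\delta/n$ where you invoke it. Your proposed slicing proof cannot repair this: the coarea/Vol'pert slicing inequality together with the $(d-1)$-dimensional isoperimetric inequality yields only $P(E)\ge c\int |E_t|^{(d-2)/(d-1)}\,dt$, and without a lower bound on the measure of the slices (which indecomposability does not provide --- it only gives $|E_t|>0$ a.e.\ on the projection) this carries no diameter information for $d\ge3$. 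Only in $d=2$ does a nonempty slice contribute a fixed amount ($\ge 2$ boundary points) to the perimeter, which is why $\operatorname{diam}\le\tfrac12 P$ holds there.

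Note also that the statement you are trying to prove in the ``hard part'' is stronger than what the theorem needs: a single indecomposable component need not collapse to one point (the rescaled dumbbell converges to \emph{two} Dirac masses), so the correct assertion is concentration of mass at finitely many points per component, not vanishing diameter. Making that precise is essentially the content of Lions' second concentration-compactness lemma, and this is how the paper proceeds: it sets $w_n:=n^{-1/d}v_n$, observes that $\nabla w_n$ is bounded in $\mathcal M$, that $w_n\to0$ in $L^1$, and that $w_n^{d/(d-1)}=v_n$ has bounded mass, and then reads off the atomic structure of the limit and the bound $\sum_i m_i^{(d-1)/d}<\infty$ directly from that lemma. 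So either restrict your elementary argument to $d=2$, or replace the diameter step by an appeal to (or a reproof of) the concentration-compactness lemma; as written, the proof does not close for $d\ge3$.
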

\begin{proof}
This follows almost immediately from the Second Concentrated Compactness Lemma of P.~L.~Lions \cite{Lions85}. From the given sequence
 $v_n$, define the rescaled sequence $w_n := n^{-\frac1d} v_n$.
 Since $v_n$ has bounded energy, the sequence $\nabla w_n$ is bounded in $\mathcal{M}$, the space of bounded Radon measures. Also
 \begin{equation}
 \int_{\Omega} w_n \, dx = n^{-\frac1d} \int_{\Omega} v_n  \, dx = n^{-\frac1d} |\Omega| \to 0,
 \end{equation}
  which implies that $w_n \to 0$ in $L^1(\Omega)$.  Note that
 \begin{equation}
w_n^{1^*} = w_n^{\frac{d}{d-1}} = v_n.
\end{equation}
Since $\int_\Omega v_n \, dx = |\Omega|$, the sequence $w_n^{1^*} dx \equiv v_n dx$ is bounded and so $w_n^{1^*} dx \stackrel{*}{\longrightharpoonup} \nu$ for some $\nu \in \mathcal{M}$.
Therefore $w_n$ satisfies the assumptions of the Second Concentrated Compactness Lemma, which implies that the limit measure $\nu$ is of the form given in the assertion.
\end{proof}

\begin{Remark}[Simpler proof for the case $d=2$]
For the case $d=2$ there is an elementary proof that does not rely on the industrial-strength compactness lemma used above. In two dimensions the diameter of a connected set can be bounded by its perimeter, $2 \, \textrm{Diam}(S) \le \textrm{Per}(S)$. The energy bound on~$v_n$ implies that the perimeter of the support of $v_n$ tends to zero. Therefore the diameter of each connected component of the support of  $v_n$ tends to zero, which implies that the limit measure $\nu$ is a sum of dirac masses.
\end{Remark}

Let $K$ be the following set of Radon measures on $\mathbb{R}^d$:
\begin{equation}
\nonumber
K= \left\{ \nu = \sum_{i=1}^\infty m_i \delta_{x_i} \; : \; m_i \ge 0, \; x_i \in \overline{\Omega}, \;
x_i \ne x_j \textrm{ if } i \ne j, \;
\sum_{i=1}^\infty m_i = |\Omega|, \;
\sum_{i=1}^\infty m_i^{\frac{d-1}{d}} < \infty
\right\}.
\end{equation}
\begin{Theorem}[$\Gamma$-convergence: The small volume fraction limit] Let $\nu$ be a Radon measure on $\mathbb{R}^d$.
\label{Gamma}
Define
\begin{equation}
\nonumber
F_p(\nu) := \left\{
\begin{array}{ll}
d  \alpha(d)^{\frac1d} \sum_{i} m_i^{\frac{d-1}{d}}   + W_p \left( 1 , \nu \right)
& \textrm{if } \nu \in K, \\
+ \infty & \textrm{otherwise,}
\end{array}
\right.
\end{equation}
where $\alpha(d)$ is the volume of the unit ball in $\mathbb{R}^d$.
With respect to the topology of weak convergence of measures, $F_{p,n}$ Gamma-converges to $F_p$ as $n \to \infty$, i.e.,
\begin{enumerate}
\item[(i)] Let $v_n \in K_n$ satisfy $v_n dx \stackrel{*}{\longrightharpoonup} \nu$ for some $\nu \in K$. Then
\begin{equation}
\nonumber
F_p (\nu) \le \liminf_{n \to \infty} F_{p,n} (v_n).
\end{equation}
\item[(ii)] Given $\nu \in K$, there exists a sequence $v_n$ in $K_n$ such that $v_n dx \stackrel{*}{\longrightharpoonup} \nu$ and
    \begin{equation}
    \nonumber
    \lim_{n \to \infty} F_{p,n} (v_n) = F_p (\nu).
    \end{equation}
\end{enumerate}
\end{Theorem}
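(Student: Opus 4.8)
The plan is to treat the two terms of $F_{p,n}$ separately, since the perimeter term and the Wasserstein term decouple in the limit. Throughout I work with the rescaling $w_n := n^{-1/d}v_n$ introduced in the proof of Theorem~\ref{Comp}, for which $w_n = n^{(d-1)/d}\mathbf 1_{S_n}$ on the support $S_n$ of $v_n$, so that the perimeter term equals the total variation, $n^{-1/d}\int_\Omega|\nabla v_n| = \int_\Omega|\nabla w_n| = n^{(d-1)/d}\,\mathrm{Per}(S_n)$, while $w_n^{d/(d-1)}\,dx = v_n\,dx \stackrel{*}{\longrightharpoonup}\nu$. The first marginal of the transport cost, $\tfrac{n-v_n}{n-1} = \tfrac{n}{n-1}\mathbf 1_{\Omega\setminus S_n}$, converges weakly to the Lebesgue measure because $|S_n| = |\Omega|/n \to 0$.

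For the lower bound (i) I would bound the two terms from below independently. The Wasserstein term is immediate: $W_p$ is jointly lower semicontinuous with respect to weak convergence of measures, via the Kantorovich dual formulation (see e.g.~\cite{Villani03}), and since $\tfrac{n-v_n}{n-1}\,dx \stackrel{*}{\longrightharpoonup} 1\cdot dx$ and $v_n\,dx\stackrel{*}{\longrightharpoonup}\nu$ with all masses equal to $|\Omega|$, this gives $\liminf_n W_p(\tfrac{n-v_n}{n-1},v_n)\ge W_p(1,\nu)$. The perimeter term is where the sharp constant $d\alpha(d)^{1/d}$ is produced, and this is the crux of part (i): applying the Second Concentrated Compactness Lemma \cite{Lions85} to $w_n$, the atoms of the limit $\nu=\sum_i m_i\delta_{x_i}$ of $w_n^{d/(d-1)}\,dx$ are matched by atoms of the weak-$*$ limit $\mu$ of $|\nabla w_n|$, with the sharp Sobolev (equivalently, isoperimetric) inequality $\|u\|_{L^{d/(d-1)}}\le (d\alpha(d)^{1/d})^{-1}\int|\nabla u|$ yielding $\mu(\{x_i\})\ge d\alpha(d)^{1/d}\,m_i^{(d-1)/d}$. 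Since $\liminf_n\int_\Omega|\nabla w_n|\ge\sum_i\mu(\{x_i\})$, summing produces exactly the limit perimeter term, and adding the two bounds gives (i).

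For the recovery sequence (ii) I would first treat the model case $\nu=\sum_{i=1}^M m_i\delta_{x_i}$ with finitely many atoms in the interior $\Omega$. Set $v_n := n\,\mathbf 1_{B_n}$ with $B_n=\bigcup_{i=1}^M B(x_i,r_i^{(n)})$ and $\alpha(d)(r_i^{(n)})^d=m_i/n$; for $n$ large these balls are disjoint and contained in $\Omega$, and $\int_\Omega v_n = \sum_i m_i = |\Omega|$, so $v_n\in K_n$. A direct computation, using that balls are isoperimetric minimisers, shows the perimeter term equals $d\alpha(d)^{1/d}\sum_{i=1}^M m_i^{(d-1)/d}$ for every $n$, matching the limit exactly. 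For the Wasserstein term I would exhibit a near-optimal plan: start from the optimal partition $\{R_i\}$ of $\Omega$ realising $W_p(1,\nu)$ (so $|R_i|=m_i$), and transport the restriction of $\tfrac{n-v_n}{n-1}\,dx$ to $R_i$ onto the ball $B(x_i,r_i^{(n)})$; since $\mathrm{diam}\,B(x_i,r_i^{(n)})\to 0$ and $|B_n|\to 0$, the cost of this plan converges to $W_p(1,\nu)$, which together with the lower semicontinuity bound from (i) gives $W_p(\tfrac{n-v_n}{n-1},v_n)\to W_p(1,\nu)$, hence $F_{p,n}(v_n)\to F_p(\nu)$. I would then remove the two restrictions by a diagonal argument, using that weak convergence of measures of fixed total mass on a bounded set is metrisable: a general $\nu\in K$ is approximated by $\nu^{(k)}$ with finitely many interior atoms, by truncating to the first $M_k$ atoms (the tail of both $\sum m_i$ and $\sum m_i^{(d-1)/d}$ vanishing) and pushing any boundary atoms a distance $o(1)$ into $\Omega$; both terms of $F_p$ are continuous along such approximations, so $F_p(\nu^{(k)})\to F_p(\nu)$, and extracting a diagonal sequence yields (ii).

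The main obstacle is the perimeter lower bound in part (i): matching the sharp isoperimetric constant $d\alpha(d)^{1/d}$ to each limiting atom while the support $S_n$ may split into infinitely many shrinking components. This is precisely what the Second Concentrated Compactness Lemma is designed to handle, and since it is already invoked in the proof of Theorem~\ref{Comp}, the remaining work is to track the sharp constant. In part (ii) the delicate point is the continuity, as opposed to mere lower semicontinuity, of the Wasserstein term along the recovery sequence, which is why the explicit competitor plan built from the optimal partition $\{R_i\}$ is needed.
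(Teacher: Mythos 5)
Your proposal is correct, and your part (ii) follows essentially the same route as the paper: the same recovery sequence of balls $B(x_i,r_i)$ with $\alpha(d)r_i^d=m_i/n$, the same exact computation of the perimeter term, and the same reduction to finitely many interior atoms (the paper delegates this to \cite[Remark~1.29]{Braides02} where you run the diagonal argument by hand; the paper also obtains convergence of the Wasserstein term directly from the fact that $W_p$ metrizes weak convergence of measures on a bounded domain, rather than from your explicit competitor plan built on the optimal partition $\{R_i\}$ --- both are fine, though note that when you truncate to the first $M_k$ atoms you must reassign the tail mass $\sum_{i>M_k}m_i$ to a retained atom so that $\nu^{(k)}$ stays in $K$). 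Part (i) is where you genuinely diverge. The paper does not extract the sharp constant from concentration-compactness: it invokes Lemma~5.3 of \cite{ChoksiPeletier10} to replace $v_n$ by $\tilde v_n=\sum_{i=1}^M v_i^n$ with mutually separated supports and no larger total variation, applies the sharp isoperimetric inequality to each $v_i^n$ separately, uses $\textrm{w-}\liminf v_i^n\ge m_i\delta_{x_i}$, and then sends $M\to\infty$. You instead read the atomwise bound $\mu(\{x_i\})\ge d\alpha(d)^{1/d}m_i^{(d-1)/d}$ off the quantitative conclusion of Lions's lemma (namely $\mu_j\ge S\nu_j^{p/p^*}$ with $S$ the optimal Sobolev constant, which for $p=1$ is the isoperimetric constant $d\alpha(d)^{1/d}$), combined with lower semicontinuity of the total mass of $|\nabla w_n|$ under weak-$*$ convergence along a subsequence realising the liminf. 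This works, and it is arguably more economical given that the paper already invokes Lions's lemma in its $BV$ form to prove Theorem~\ref{Comp}; the price is that you must use the sharp-constant, $p=1$/$BV$ version of that lemma (Theorem~\ref{Comp} only needs its qualitative conclusion), whereas the paper's route for (i) needs only the classical isoperimetric inequality plus the cited splitting lemma, which is what handles the scenario you rightly identify as the crux --- the support fragmenting into many shrinking components near a single atom.
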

\begin{proof}
First we prove (ii). By approximation (see e.g.~\cite[Remark~1.29]{Braides02}) we can assume that $\nu$ is a finite sum of dirac masses: $\nu = \sum_{i=1}^M m_i \delta_{x_i} \in K$. We can also assume that none of the points $x_i$ belong to the boundary of $\Omega$.
Define $r_i$ by $r_i^d = \frac{m_i}{n \alpha(d)}$ so that the ball $B(x_i,r_i)$ has volume $m_i/n$.
Let $v_n$ be the function taking values in $\{ 0, n\}$ with support $\overline{\bigcup_i B(x_i,r_i)}$.
Note that, for $n$ sufficiently large, the balls $B(x_i,r_i)$ are disjoint and are contained in $\Omega$. Therefore $v_n \in K_n$. It is easy to check that
\begin{equation}
v_n dx \stackrel{*}{\longrightharpoonup} \nu, \quad \frac{n-v_n}{n-1} \stackrel{*}{\longrightharpoonup} 1.
\end{equation}
Since the $p$-Wasserstein distance metrizes weak convergence of measures (see \cite[Theorem 7.12]{Villani03}), it follows that
\begin{equation}
\label{C.7}
W_p \left( \frac{n-v_n}{n-1}, v_n \right)
\to W_p(1,\nu).
\end{equation}
Recall that $d \alpha(d)$ is the surface area of the unit ball in $\mathbb{R}^d$. Therefore for all $n$
\begin{equation}
\label{C.8}
n^{-\frac{1}{d}} \int_\Omega | \nabla v_n | = d  \alpha(d)^{\frac{1}{d}} \sum_i m_i^{\frac{d-1}{d}}.
\end{equation}
Combining \eqref{C.7} and \eqref{C.8} yields (ii).

Now we turn our attention to (i).
Let $v_n dx \stackrel{*}{\longrightharpoonup} \nu$, where $\nu = \sum_{i=1}^\infty m_i \delta_{x_i} \in K$.
Fix $M\in \mathbb{N}$. By Lemma 5.3 in \cite{ChoksiPeletier10} we can modify the sequence $v_n$ to obtain a sequence $\tilde{v}_n$ such that
\[
\tilde{v}_n = \sum_{i=1}^M v^n_i
\]
where $v^n_i \in BV(\Omega;\{ 0,n\})$ have disjoint supports,
$\textrm{dist}(\textrm{supp} \, v^n_i,\textrm{supp}\, v^n_j) > 0 \textrm{ for all } i \ne j$,
and satisfy
\begin{equation}
\label{wli}
\textrm{w-}\liminf_{n \to \infty} v^n_i \ge m_i \delta_{x_i}, \quad i \in \{1, \ldots , M \}
\end{equation}
in the sense of distributions, and
\begin{equation}
\label{LB}
\int_{\Omega} |\nabla v_n| \ge \int_{\Omega} |\nabla \tilde{v}_n |.
\end{equation}
This modification is necessary so that we can apply the Isoperimetric Inequality (with optimal constant) as follows:
\begin{equation}
\label{iso}
\begin{aligned}
n^{-\frac 1d}\int_{\Omega} |\nabla \tilde{v}_n | = n^{-\frac 1d} \sum_{i=1}^M \int_{\mathbb{R}^d} | \nabla v_i^n |
& \ge n^{-\frac 1d} \sum_{i=1}^M d \alpha(d)^{\frac 1d} \left( \int_{\mathbb{R}^d} |v_i^n|^{\frac{d}{d-1}} \, dx \right)^{\frac{d-1}{d}} \\
& = d \alpha(d)^{\frac 1d} \sum_{i=1}^M \left( \int_{\mathbb{R}^d} v_i^n \, dx \right)^{\frac{d-1}{d}},
\end{aligned}
\end{equation}
where the last equality holds since $v^n_i$ takes values is $\{0,n \}$. Equations \eqref{LB} and \eqref{iso} imply that
\[
F_{p,n}(v_n) \ge d \alpha(d)^{\frac 1d} \sum_{i=1}^M \left( \int_{\mathbb{R}^d} v_i^n \, dx \right)^{\frac{d-1}{d}} + \, W_p \left( \frac{n-v_n}{n-1},v_n \right).
\]
Therefore by using \eqref{wli} and the fact that the Wasserstein distance metrizes weak convergence of measures we obtain
\[
\liminf_{n \to \infty} F_{p,n}(v_n) \ge d \alpha(d)^{\frac 1d} \sum_{i=1}^M m_i^{\frac{d-1}{d}} + W_p(1,\nu).
\]
This holds for all $M \in \mathbb{N}$. Therefore it also holds for $M=\infty$ and we obtain the desired result.
\end{proof}

\begin{Remark}[Existence of global minimisers for $F_p$ and $F_{p,n}$]
\label{EGM}
It is clear that $F_p$ has a global minimiser since it is a $\Gamma$-limit and so is lower semicontinuous. It is also easy to see that $F_{p,n}$
has a global minimiser: Any infimising sequence $\{v_k\} \subset K_n$ is bounded in $BV(\Omega)$ and so has a strongly convergent subsequence $v_{k_j} \to v$ in $L^1(\Omega)$. The variation measure is lower semicontinuous with respect to this convergence, $|\nabla v|(\Omega) \le \liminf_{j \to \infty} |\nabla v_{k_j}|(\Omega)$ (see \cite[page 172, Theorem 1]{EvansGariepy}), and the Wasserstein cost $W_p$ is continuous with respect to this convergence, therefore $F_{p,n}(v)=\inf_{K_n} F_{p,n}$.
\end{Remark}

\begin{Remark}[Minimisers of $F_2$ are supported on a finite set]
It is shown in \cite[Lemma 7(i)]{BournePeletierTheil} that if $\nu$ is a global minimiser of $F_2$, then $\nu$ is supported on a finite set (as opposed to just a countable set). This is shown by proving a positive lower bound on $\inf_i m_i$.
\end{Remark}
%
%
%
%
%
\section{Euler-Lagrange Equations}
\label{E}
In the rest of the paper we study the limit energy $F_p$.
We start by deriving two Euler-Lagrange equations.

\begin{Proposition}[Euler-Lagrange equation obtained by varying $x_i$]
\label{Prop1}
Let $\nu = \sum_i m_i \delta_{x_i} \in K$ be a minimiser of $F_p$ such that $x_i \in \Omega$ for all $i$, i.e., $x_i \notin \partial \Omega$. Let $V_i \subseteq \Omega$ be the set of points transported to $x_i$ by the optimal transport map
$T$ for $W_p(1,\nu)$, i.e., $V_i = T^{-1}({x_i})$. Then for each $i$
\begin{equation}
\label{EL1}
0 = \int_{V_i} (x_i-x) |x_i-x|^{p-2} \, dx.
\end{equation}
\end{Proposition}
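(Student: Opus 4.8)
The plan is to perturb a single atom and combine the minimality of $\nu$ with a one-sided comparison estimate for the Wasserstein term. Fix the index $i$ and a direction $\xi \in \mathbb{R}^d$, and for small $\epsilon$ set $\nu_\epsilon := \sum_{j \ne i} m_j \delta_{x_j} + m_i \delta_{x_i + \epsilon \xi}$. Since $x_i \notin \partial\Omega$, the point $x_i + \epsilon\xi$ stays in $\Omega$ and distinct from the other atoms for $|\epsilon|$ small, so $\nu_\epsilon \in K$. The crucial simplification is that the perimeter term $d\,\alpha(d)^{1/d}\sum_j m_j^{(d-1)/d}$ depends only on the masses $m_j$, which are unchanged by this perturbation; hence $F_p(\nu_\epsilon) - F_p(\nu) = W_p(1,\nu_\epsilon) - W_p(1,\nu)$, and minimality of $\nu$ gives $W_p(1,\nu_\epsilon) \ge W_p(1,\nu)$ for all small $\epsilon$.

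For the matching upper bound I would use the optimal partition for $\nu$ as a (generally suboptimal) competitor for $\nu_\epsilon$. The map sending $V_j \to x_j$ for $j \ne i$ and $V_i \to x_i + \epsilon\xi$ is admissible for $W_p(1,\nu_\epsilon)$ precisely because each cell still carries the correct mass $|V_j| = m_j$. This yields
\[
W_p(1,\nu_\epsilon) \le W_p(1,\nu) + \int_{V_i}\bigl(|x - x_i - \epsilon\xi|^p - |x - x_i|^p\bigr)\,dx,
\]
which, combined with the lower bound, sandwiches the increment:
\[
0 \le W_p(1,\nu_\epsilon) - W_p(1,\nu) \le \int_{V_i}\bigl(|x - x_i - \epsilon\xi|^p - |x - x_i|^p\bigr)\,dx.
\]

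Dividing by $\epsilon > 0$ and letting $\epsilon \downarrow 0$, the right-hand side converges to $p\int_{V_i}(x_i - x)\,|x_i - x|^{p-2}\cdot\xi\,dx$, giving $0 \le p\int_{V_i}(x_i-x)|x_i-x|^{p-2}\cdot\xi\,dx$. Running the same argument with $-\xi$ reverses the inequality, so the integral vanishes for every $\xi$, which is exactly the vector identity \eqref{EL1}. The one analytic point to verify is the passage to the limit under the integral: using the elementary inequality $\bigl||a|^p - |b|^p\bigr| \le p\,(|a|\vee|b|)^{p-1}|a-b|$ with $a = x - x_i - \epsilon\xi$ and $b = x - x_i$, the difference quotients are dominated by $p(|x - x_i| + |\xi|)^{p-1}|\xi|$, which is integrable on the bounded set $V_i$ for every $p \ge 1$; dominated convergence then applies, the limit integrand $|x_i - x|^{p-2}(x_i - x)$ being itself integrable since $|x_i - x|^{p-1}$ is bounded on $V_i$.

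I expect the main content to be conceptual rather than estimative: the key is the observation that reusing the \emph{unperturbed} optimal partition is legitimate exactly because moving $x_i$ leaves all masses fixed, so no mass has to be redistributed to keep the transport admissible. This is what lets a one-sided competitor bound combine with minimality to force a two-sided, hence stationary, conclusion, and it is the concrete form of the envelope principle that the first-order effect of re-optimising the partition vanishes. The hypothesis $x_i \notin \partial\Omega$ enters only to allow perturbations in every direction $\xi$; atoms on $\partial\Omega$ would produce a variational inequality instead of an equality.
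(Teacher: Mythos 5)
Your proof is correct, but it takes a genuinely different route from the paper. The paper performs an inner variation in the style of Jordan--Kinderlehrer--Otto: it pushes $\nu$ forward along a flow $\Phi_\tau$ generated by a smooth vector field $\xi$, and cites the JKO computation to justify differentiating $\tau\mapsto W_p(1,\Phi_{\tau\#}\nu)$ at $\tau=0$, first obtaining the Euler--Lagrange equation in terms of an optimal plan $\gamma$ and only then specialising to the map $T$ and the cells $V_i$. You instead translate a single atom by $\epsilon\xi$ and replace the differentiability of the Wasserstein cost by an explicit one-sided competitor estimate: reusing the unperturbed optimal partition is admissible because the cell masses are unchanged, and minimality plus the reversal $\xi\mapsto-\xi$ upgrades the resulting variational inequality to an equality. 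This buys a self-contained, elementary argument (no appeal to the JKO differentiation, and your dominated-convergence justification covers all $p\ge1$ including the mildly singular range $1\le p<2$), at the price of exploiting the atomic structure of $\nu$ rather than working at the level of general plans as the paper does. Two small points you should make explicit: (a) when $\nu$ has countably many atoms, $x_i+\epsilon\xi$ could collide with another atom $x_j$; this happens for at most countably many $\epsilon$ (and is in any case harmless, since merging atoms only decreases the term $\sum_j m_j^{(d-1)/d}$ by subadditivity, so the inequality $W_p(1,\nu)\le W_p(1,\nu_\epsilon)$ survives); (b) your argument, like the paper's statement, presupposes that the optimal transport is induced by a map $T$ with cells $V_i$, which is part of the hypotheses rather than something either proof establishes.
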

For the case $p=2$ this says that each mass $x_i$ is located at the centre of mass, or centroid, of its transport region $V_i$. The assumption that minimisers have no $x_i \in \partial \Omega$ is necessary. For example, if $\Omega$ is an annulus with $|\Omega|$ sufficiently small, then global minimisers $\nu$ of $F_p$ are supported at just one point, which lies on the inner boundary of the annulus (there are infinitely many global minimisers). The centre of mass of the annulus, however, lies at its centre, and so \eqref{EL1} is not satisfied.

\begin{proof}
We vary the positions of the dirac masses, but not their weights, using an inner variation along the same lines as \cite{JordanKinderlehrerOtto}.
Suppose that $\nu = \sum_i m_i \delta_{x_i}$ is a minimiser of~$F_p$. We consider variations of the form
$\nu_\tau = \Phi_{\tau \#} \nu$, where $\{ \Phi_\tau \}_{\tau \ge 0}$ is a 1-parameter family of smooth invertible maps from $\overline{\Omega} \to \overline{\Omega}$ such that $\Phi_0(x)=x$. To be precise, we define $\Phi_\tau$ through the following ODE. Let $\xi \in C_0^\infty(\overline{\Omega};\overline{\Omega})$. Define $\Phi_\tau(y) \equiv \Phi(\tau,y)$ by
\begin{equation}
\label{E.1}
\begin{aligned}
\frac{d}{d\tau} \Phi(\tau,y) & = \xi(\Phi(\tau,y)) \quad \tau \ge 0,\\
\Phi(0,y) & = y.
\end{aligned}
\end{equation}
Since in our case the measure $\nu$ has such a simple form,  the push-forward $\nu_\tau = \Phi_{\tau\#} \nu$ reduces to $\nu_\tau = \sum_i  m_i \delta_{\Phi_\tau (x_i)}$. Formally,
the Euler-Lagrange equation is
\begin{equation}
\label{E.2}
\frac{d}{d\tau}  F_p(\nu_\tau) |_{\tau = 0} = 0.
\end{equation}
Since we are not varying the weights $m_i$ of the dirac masses, the left-hand side of this equation reduces to $\partial_\tau W_p(1,\nu_{\tau}) |_{\tau=0}$. Computing this derivative rigorously requires some care. The same calculation as in
 \cite[p.~11]{JordanKinderlehrerOtto} shows that equation \eqref{E.2} leads to the following Euler-Lagrange equation:
\begin{equation}
\label{E.7}
0 = \int_{\Omega \times \Omega} \xi(y) \cdot (y-x) |y-x|^{p-2} \, d \gamma \quad \textrm{for all } \xi \in C_0^\infty
\end{equation}
where $\gamma$ is
an optimal transport plan for the pair $(1,\nu)$, i.e.,
\begin{equation}
\label{E.3}
W_p(1,\nu) = \int_{\Omega \times \Omega} |y-x|^p \, d \gamma(x,y).
\end{equation}

Now we rewrite equation \eqref{E.7} in the form of the proposition statement. If $T$ is an optimal transport map, so that $\nu = T_\# 1$ and $\gamma = (\mathrm{id} \times T)_\# 1$, then equation \eqref{E.7} can be written as
\begin{equation}
\label{E.8}
0 = \int_{\Omega} \xi(T(x)) \cdot (T(x)-x) |T(x)-x|^{p-2} \, dx \quad \textrm{for all } \xi \in C_0^\infty.
\end{equation}
Note that $T$ maps $\Omega$ onto spt$(\nu) = \bigcup_i \{x_i\}$. Let $V_i$ denote the set of points mapped to $x_i$ by $T$, i.e., $V_i=T^{-1}({x_i})$. Then equation \eqref{E.8} reduces to
\begin{equation}
\label{E.9}
0 = \sum_i \int_{V_i} \xi(x_i) \cdot (x_i-x) |x_i-x|^{p-2} \, dx \quad \textrm{for all } \xi \in C_0^\infty.
\end{equation}
Since this holds for all $\xi \in C_0^\infty$ and since $x_i \notin \partial \Omega$ for any $i$, we arrive at the Euler-Lagrange equations
\begin{equation}
\label{E.10}
0 = \int_{V_i} (x_i-x) |x_i-x|^{p-2} \, dx \quad \textrm{for all } i.
\end{equation}
\end{proof}

\begin{Proposition}[Euler-Lagrange equation obtained by varying $m_i$]
\label{Prop2}
Let $\nu = \sum_{i=1}^M m_i \delta_{x_i} \in K$ be a minimiser of $F_p$ consisting of a finite number of dirac masses.
Let $(\phi_*,\psi_*)$ be an optimal Kantorovich potential pair for the problem of transporting $1$ to $\nu$, i.e.,
\[
W_p(1,\nu) = \int_{\Omega} \phi_* (x) dx + \sum_i m_i \psi_*(x_i)
\]
and $\phi_*(x) + \psi_*(x_i) \le |x-x_i|^p$ for almost all $x \in \Omega$ and all $i$.
Then
\begin{equation}
\label{EL2}
 \alpha (d)^{\frac{1}{d}} (d-1) m_i^{-\frac{1}{d}}
+  \psi_*(x_i) = \mathrm{constant}
\end{equation}
where the constant is independent of $i$.
\end{Proposition}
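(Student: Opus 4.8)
The plan is to hold the support points $x_i$ fixed and vary only the weights $m_i$, subject to the single global constraint $\sum_i m_i = |\Omega|$ that defines $K$. Writing $w(m) := W_p\bigl(1, \sum_{i=1}^M m_i \delta_{x_i}\bigr)$ and $g(m) := d\,\alpha(d)^{1/d}\sum_i m_i^{(d-1)/d}$, so that $F_p(\nu) = g(m) + w(m)$, minimality means that $m = (m_1,\dots,m_M)$ minimises $g + w$ over the simplex $\{m_i \ge 0,\ \sum_i m_i = |\Omega|\}$. The natural admissible variations are mass transfers: for a fixed pair $j \ne k$ with $m_k > 0$, set $m(t) := m + t(e_j - e_k)$, which stays in $K$ for $|t|$ small and preserves total mass. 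I would then impose $\tfrac{d}{dt}F_p(m(t))\big|_{t=0} = 0$ for every such pair.

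The derivative of the first (perimeter) term is elementary: since $\partial_{m_i}g = d\,\alpha(d)^{1/d}\cdot \tfrac{d-1}{d}\, m_i^{-1/d} = \alpha(d)^{1/d}(d-1)m_i^{-1/d}$, the transfer direction contributes $\alpha(d)^{1/d}(d-1)\bigl(m_j^{-1/d} - m_k^{-1/d}\bigr)$. The derivative of the Wasserstein term is where the Kantorovich pair enters. By duality, $w(m) = \sup\{\int_\Omega \phi\,dx + \sum_i m_i \psi(x_i)\}$ over admissible pairs with $\phi(x) + \psi(x_i) \le |x-x_i|^p$; crucially the feasible set of potentials does not depend on $m$, while the objective is affine in $m$. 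Hence $w$ is convex in $m$, and the envelope (Danskin) theorem identifies its derivative with the explicit $m$-dependence evaluated at the optimal pair, giving $\partial_{m_i}w = \psi_*(x_i)$ and thus the contribution $\psi_*(x_j) - \psi_*(x_k)$.

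Setting the total derivative to zero yields, for every pair $j,k$,
\[
\alpha(d)^{1/d}(d-1)m_j^{-1/d} + \psi_*(x_j) = \alpha(d)^{1/d}(d-1)m_k^{-1/d} + \psi_*(x_k),
\]
which is exactly the assertion that $\alpha(d)^{1/d}(d-1)m_i^{-1/d} + \psi_*(x_i)$ is independent of $i$; the common value is precisely the Lagrange multiplier associated with the constraint $\sum_i m_i = |\Omega|$.

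The main obstacle is the rigorous justification of $\partial_{m_i}w = \psi_*(x_i)$. The envelope theorem delivers a clean derivative only when the maximising potential is unique; here $\psi_*$ is determined only up to an additive constant, but since the admissible transfer directions satisfy $\sum_i v_i = 0$, the pairing $\sum_i v_i\,\psi_*(x_i)$ is insensitive to this constant, so the directional derivatives along the constraint surface are well defined and the ambiguity is absorbed into the (undetermined) constant on the right-hand side. What must still be checked is that, modulo this constant, the optimal $\psi_*$ is genuinely unique --- equivalently, that $w$ is differentiable along the constraint. For $p>1$ with Lebesgue source this is standard semi-discrete optimal transport: the values $\psi_*(x_i)$ are the weights of the power (Laguerre) diagram whose cells are the transport regions $V_i$, and these weights are fixed up to a constant by the requirement that the cells carry the masses $m_i$ (cf.\ Proposition~\ref{char}). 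I would isolate this uniqueness/differentiability statement as the one nontrivial ingredient and treat the remainder as a constrained first-order computation.
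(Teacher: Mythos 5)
Your approach is essentially the paper's: fix the $x_i$, perturb the weights along mass-preserving directions, differentiate the perimeter term explicitly, and identify the derivative of $m\mapsto W_p(1,\sum_i m_i\delta_{x_i})$ with the optimal potential values $\psi_*(x_i)$ via Kantorovich duality. The paper packages the duality step slightly differently --- instead of invoking Danskin's theorem it uses the one-sided estimate that $(\phi_*^\eta,\psi_*^\eta)$ is admissible for the unperturbed problem, which together with minimality of $\nu$ gives $0\le \sum_i n_i\bigl[\alpha(d)^{1/d}(d-1)m_i^{-1/d}+\psi_*^\eta(x_i)\bigr]+o(1)$, and then passes to the limit $\eta\to0$ --- but both routes reduce to the same nontrivial ingredient that you correctly isolate: uniqueness of the optimal potentials up to an additive constant (equivalently, convergence $\psi_*^\eta(x_i)\to\psi_*(x_i)$ after normalising at $x_1$).

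The one genuine gap is that you leave this ingredient unproved and justify it only for $p>1$ (``standard semi-discrete optimal transport''), whereas the proposition is stated for all $p\in[1,\infty)$ and the case $p=1$ is not covered by the standard power-diagram/Laguerre argument you cite (for $p=1$ the cells are Apollonius cells and the cost is not strictly convex, so the usual reasoning via invertibility of the gradient of the cost does not apply directly). The paper closes exactly this gap in Lemma~\ref{lemma:convergence-Kant-pot}: it writes the dual value as $f(a)=\inf_{\{\Omega_i\}}\sum_i\int_{\Omega_i}(|x-x_i|^p-a_i)\,dx+\sum_i m_i a_i$ over partitions of $\Omega$, and shows $f$ is affine only along the direction $(1,\dots,1)$ and strictly concave in all other directions, because perturbing $a$ by a non-constant $\tilde a$ forces the optimal partition to change. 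This gives uniqueness of $\{\psi_*(x_i)\}$ up to constants for every $p\ge1$, which combined with the compactness argument of Buttazzo--Santambrogio yields the needed convergence. To complete your proof you would need to supply this uniqueness statement for $p=1$ (or restrict the proposition to $p>1$).
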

The constant appearing in \eqref{EL2} is the Lagrange multiplier for the constraint $\sum_i m_i = | \Omega |$.
Its presence also agrees with the fact that $\phi_*$ and $\psi_*$ are only defined up to a constant: If $(\phi_*,\psi_*)$ is an optimal Kantorovich potential pair then so is $(\phi_*+c,\psi_*-c)$ for all $c \in \mathbb{R}.$

\begin{proof}
This time we keep the positions of the dirac masses fixed and vary their weights in the following way. Define
\begin{equation}
\label{E.11}
m^\eta_i = m_i + \eta n_i, \quad n_i \in \mathbb{R}, \quad \sum_i n_i = 0.
\end{equation}
Given $n_i$, for $\eta$ sufficiently small we have $m^\eta_i > 0$ for all $i$ and so $\nu^\eta := \sum_i m_i^\eta \delta_{x_i} \in K$.
Let $(\phi^\eta_*,\psi^\eta_*)$ be an optimal Kantorovich potential pair for the problem of transporting $1$ to $\nu^\eta$, i.e.,
\begin{equation}
\label{E.12}
W_p(1,\nu^\eta) = \int_\Omega \phi^\eta_* \, dx + \sum_i m_i^\eta \psi^\eta_* (x_i)
\end{equation}
and $\phi^\eta_*(x)+\psi^\eta_*(x_i) \le |x-x_i|^p$ for almost all $x \in \Omega$ and all $i$.
By adding and subtracting a constant to $\psi_*^\eta$ and $\phi_*^\eta$ we can assume that $\psi_*^\eta(x_1) = 0$, and similarly $\psi_*(x_1)=0$; by Lemma~\ref{lemma:convergence-Kant-pot} below, then $\psi_*^\eta(x_i)\to\psi(x_i)$ for all $i$.

Since $(\phi_*^\eta,\psi_*^\eta)$ is admissible, we can estimate
\begin{equation}
\label{E.14}
W_p(1,\nu) 
\ge
\int_\Omega \phi^\eta_* \, dx + \sum_i m_i \psi^\eta_*(x_i).
\end{equation}
Therefore by equations \eqref{E.11}--\eqref{E.14},
\begin{equation}
W_p(1,\nu^\eta) - W_p(1,\nu) \le \eta \sum_i n_i \psi^\eta_*(x_i).
\end{equation}
Therefore, since $\nu$ is a minimiser of $F_p$,
\begin{equation}
0 \le \frac{1}{\eta} [ F_p(\nu^\eta) - F_p(\nu) ]
\le \sum_{i} \left[ d \alpha(d)^{\frac 1d}  \frac{(m_i^\eta)^{\frac{d-1}{d}} - (m_i)^{\frac{d-1}{d}}}{\eta} + n_i \psi^\eta_*(x_i) \right].
\end{equation}
Taking the limit of the right-hand side as $\eta \to 0$ gives
\begin{equation}
\label{E.15}
0 \le  \sum_i \left[ \alpha (d)^{\frac{1}{d}} (d-1)m_i^{-\frac{1}{d}}
+ \psi_*(x_i) \right] n_i
\end{equation}
for all $n_i$ satisfying \eqref{E.11}.
Therefore
\begin{equation}
 \alpha (d)^{\frac{1}{d}} (d-1) m_i^{-\frac{1}{d}}
+  \psi_*(x_i) = \mathrm{constant}
\end{equation}
as required.
\end{proof}

\begin{Lemma}
\label{lemma:convergence-Kant-pot}
Let $\nu = \sum_{i=1}^M m_i\delta_{x_i}$ and $\nu^\eta = \sum_{i=1}^M m_i^\eta\delta_{x_i}$ satisfy $m_i^\eta\to m_i$ as $\eta\to0$. Let $(\phi_*,\psi_*)$ and $(\phi_*^\eta,\psi_*^\eta)$ be corresponding Kantorovich potentials, and assume that $\psi_*(x_1) = \psi_*^\eta(x_1) = 0$. Then $\psi_*^\eta(x_i)\to\psi_*(x_i)$ for all $i$.
\end{Lemma}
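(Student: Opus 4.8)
The plan is to exploit the semi-discrete structure of the transport of the Lebesgue measure onto the finitely supported measures $\nu$ and $\nu^\eta$, and to characterise the normalised potential values as the unique maximiser of a finite-dimensional concave functional that depends continuously on the masses. Writing $\psi=(\psi_1,\dots,\psi_M)\in\mathbb R^M$ and recalling that the optimal $\phi$ is the $c$-transform $\phi(x)=\min_i(|x-x_i|^p-\psi_i)$, Kantorovich duality reduces the problem to maximising
\[
G^\eta(\psi)=\int_\Omega \min_{1\le i\le M}\bigl(|x-x_i|^p-\psi_i\bigr)\,dx+\sum_{i=1}^M m_i^\eta\,\psi_i
\]
over $\psi\in\mathbb R^M$. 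This functional is concave, invariant under $\psi\mapsto\psi+c\,\mathbf 1$ (because $\sum_i m_i^\eta=|\Omega|$), and has partial derivatives $\partial_{\psi_i}G^\eta=m_i^\eta-|V_i(\psi)|$, where $V_i(\psi)=\{x\in\Omega: i\in\arg\min_j(|x-x_j|^p-\psi_j)\}$ is the generalised Laguerre cell of Proposition~\ref{char}. Thus the normalised potential $\psi^\eta:=(\psi_*^\eta(x_i))_i$ is characterised by $|V_i(\psi^\eta)|=m_i^\eta$ for all $i$, and likewise the limit potential $\psi^0:=(\psi_*(x_i))_i$ satisfies $|V_i(\psi^0)|=m_i$.

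First I would establish a uniform a priori bound. Let $D=\operatorname{diam}(\Omega)$ and let $j,k$ realise $\max_i\psi_i^\eta$ and $\min_i\psi_i^\eta$. Since $m_k^\eta>0$ for $\eta$ small, $V_k(\psi^\eta)$ has positive measure, so there is an $x$ in it with $|x-x_k|^p-\psi_k^\eta\le |x-x_j|^p-\psi_j^\eta$; rearranging gives $\max_i\psi_i^\eta-\min_i\psi_i^\eta\le |x-x_j|^p-|x-x_k|^p\le D^p$. Combined with the normalisation $\psi_1^\eta=0$, this yields $|\psi_i^\eta|\le D^p$ for all $i$ and all small $\eta$, so $\{\psi^\eta\}$ is precompact in $\mathbb R^M$.

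Next I would pass to the limit by stability of maximisers. As $\eta\to0$ only the linear term of $G^\eta$ changes, and since $m_i^\eta\to m_i$ we have $G^\eta\to G^0$ uniformly on $\{|\psi_i|\le D^p\}$. Hence any subsequential limit $\bar\psi$ of $\psi^\eta$ satisfies $G^0(\bar\psi)\ge G^0(\psi)$ for all $\psi$ (let $\eta\to0$ in $G^\eta(\psi^\eta)\ge G^\eta(\psi)$, using uniform convergence for the left side and pointwise convergence for the right) and $\bar\psi_1=0$; that is, $\bar\psi$ is a normalised maximiser of $G^0$, i.e.\ a Kantorovich potential for $(1,\nu)$.

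Finally I would close the argument by uniqueness of the normalised potential, which I expect to be the main obstacle. Suppose $\psi,\psi'$ both maximise $G^0$, set $c=\max_i(\psi_i'-\psi_i)$ and $S=\{i:\psi_i'-\psi_i=c\}$. A direct comparison of the cells shows $V_i(\psi)\subseteq V_i(\psi')$ up to null sets for $i\in S$, so $\bigcup_{i\in S}V_i(\psi)\subseteq\bigcup_{i\in S}V_i(\psi')$; since both unions have measure $\sum_{i\in S}m_i$, they agree up to a null set. If $S\ne\{1,\dots,M\}$, connectedness of $\Omega$ forces some $V_i$ with $i\in S$ to share an interface of positive $(d-1)$-dimensional measure with some $V_j$, $j\notin S$, and along that interface a positive-measure slab of $V_j(\psi)$ joins $V_i(\psi')$, contradicting the equality of the two unions. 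Hence $S=\{1,\dots,M\}$, so $\psi'-\psi$ is constant and the normalisation gives $\psi=\psi'$. Therefore $\bar\psi=\psi^0$; since every subsequence of $\{\psi^\eta\}$ has a further subsequence converging to $\psi^0$, the whole sequence converges and $\psi_*^\eta(x_i)\to\psi_*(x_i)$ for every $i$. The delicate point here is the geometric input behind uniqueness — that the common boundary of two complementary unions of positive-measure cells carries positive $(d-1)$-dimensional measure, equivalently that the adjacency graph of the generalised power diagram is connected; this is classical when $p=2$, where the cells are convex polytopes, and requires more care for $p\ne2$ where the cells may be non-convex.
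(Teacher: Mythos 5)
Your proof is correct in outline and reaches the same finite-dimensional dual functional as the paper (your $G^\eta$ is the paper's $f$ with $a_i=\psi_i$), but it takes a genuinely different and more self-contained route. The paper outsources the compactness and stability steps to \cite[Lemma~3.4]{ButtazzoSantambrogio} (stated there for $p>1$) and only supplies the missing ingredient, namely uniqueness of the potentials up to additive constants, which it obtains from strict concavity of $f$ in all non-constant directions; you instead prove compactness (the $D^p$ bound via the max/min indices), stability of maximisers under the uniform perturbation $\sum_i(m_i^\eta-m_i)\psi_i$, and uniqueness directly, the last via a maximum-principle argument on the set $S$ where $\psi'-\psi$ attains its maximum. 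What your approach buys is independence from the cited lemma and an argument valid uniformly in $p\ge1$; what the paper's buys is brevity, though its assertion that the concavity inequality ``is strict whenever the $\tilde a_i$ are not all equal'' conceals essentially the same geometric fact you isolate as the delicate point. On that point: you do not actually need the interface between $\bigcup_{i\in S}V_i$ and its complement to carry positive $\mathcal H^{d-1}$-measure. Set $g(x)=\min_{k\in S}(|x-x_k|^p-\psi_k)-\min_{k\notin S}(|x-x_k|^p-\psi_k)$ and $c'=\max_{k\notin S}(\psi'_k-\psi_k)<c$; then $A\subseteq\{g\le0\}$, $\Omega\setminus A\subseteq\{g\ge0\}$ up to null sets, and the corresponding function for $\psi'$ satisfies $g'\le g-(c-c')$, so $\{g<c-c'\}\subseteq A'$ and the equality $|A|=|A'|$ forces the open set $\{0<g<c-c'\}$ to be empty. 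Since $g$ is continuous on the connected set $\Omega$, takes nonpositive values on $A$ and, because distinct cells overlap only in null sets, must take some strictly positive value on $\Omega\setminus A$, the intermediate value theorem gives a contradiction. This replaces the adjacency-graph/positive-perimeter claim (which is indeed cleanest for $p=2$) by pure continuity and connectedness, and works for every $p\in[1,\infty)$.
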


\begin{proof}
This result is a small extension of~\cite[Lemma~3.4]{ButtazzoSantambrogio}, who proved the result for  $p>1$. Here we extend it to all $p\geq1$.
The only new requirement is a proof that optimal potentials are unique for $p\geq1$, up to addition of constants, and we now show this. Fix $\nu=\sum_{i=1}^M m_i\delta_{x_i}$; note that the Kantorovich pair $(\phi_*,\psi_*)$ satisfies
\[
\phi_*(x) = \min_i \bigl(|x-x_i|^p - \psi_*(x_i)\bigr),\qquad\text{for }x\in\Omega,
\]
so that
\begin{align*}
W_p(1,\nu) &= \sup_{\{\psi(x_i)\}_i} \int_\Omega \min_i \bigl(|x-x_i|^p - \psi(x_i)\bigr)\, dx + \sum_i m_i \psi(x_i)\\
&= \sup_{a = \{a_i\}_{i=1}^M }\ \underbrace{\inf_{\{\Omega_i\}\text{ partition of }\Omega}\
\sum_i \int_{\Omega_i} \bigl(|x-x_i|^p - a_i\bigr)\, dx + \sum_i m_i a_i}_{f(a)}.
\end{align*}
The function $f$ is affine along lines of the form $a+b(1,\dots,1)$, for $b\in\mathbb R$, and strictly concave in all other directions.
This follows from remarking that if $\{\Omega_i\}_i$ is an optimal partition of $\Omega$ for $a = \{a_i\}_i$, then for a perturbed $a + \tilde a$ we have
\begin{align*}
& f(a+\tilde a) - f(a) =\\
&=\inf _{\{\widetilde \Omega_i\}\text{ partition of }\Omega}
\sum_i \int_{\widetilde \Omega_i} \bigl(|x-x_i|^p - a_i-\tilde a_i\bigr)\, dx
- \sum_i \int_{\Omega_i} \bigl(|x-x_i|^p - a_i\bigr)\, dx + \sum_i m_i \tilde a_i\\
&\leq \qquad \sum_i \int_{\Omega_i} \bigl(|x-x_i|^p - a_i-\tilde a_i\bigr)\, dx
- \sum_i \int_{\Omega_i} \bigl(|x-x_i|^p - a_i\bigr)\, dx + \sum_i m_i \tilde a_i\\
&= \qquad \sum_i \tilde a_i(m_i-|\Omega_i|),
\end{align*}
and the  inequality is strict whenever $\{\Omega_i\}_i$ is not an optimal partition for $a+\tilde a$, which is whenever the $\tilde a_i$ are not all equal. This strict inequality implies the strict concaveness of $f$ in non-constant directions, and therefore the $a_i$ (and the $\psi_*(x_i)$) are uniquely determined up to constants.
\end{proof}

\begin{Remark}[Stationary points also satisfy the Euler-Lagrange equations]
It can also be shown that stationary points of $F_p$ satisfy the Euler-Lagrange equation \eqref{EL2}, not only global minimisers.
This is shown by using that $\phi^\eta_* \to \phi_*$ uniformly, which follows from the fact that $\phi^\eta_*$ is $|x-y|^p$-concave (c.f.~\cite[Lemma 3.4]{ButtazzoSantambrogio}).
Then it follows that $F_p(\nu^\eta)$ is differentiable with respect to $\eta$ and
 so we do not need to use the fact that $\nu$ is a global minimiser, just that it is a stationary point. The Euler-Lagrange equation \eqref{EL1} also holds for stationary points.
\end{Remark}

%
%
%
%
%
%
%
%

\section{Numerical Optimisation}
\label{N}
In the remainder of the paper we study minimisers of the limit energy $F_p$ derived in Theorem~\ref{Gamma}.
We characterise global minima and derive an algorithm to compute them for all $p \in [1,\infty)$, $d \ge 2$. For implementation purposes we limit
our attention to two dimensions, $d=2$, and to $p=2$. The algorithm will be implemented in three dimensions and for other values of $p$ in a future paper. The simple case $d=1$ is discussed in Remark \ref{Remark d=1}.

By rescaling so that $\Omega \subset \mathbb{R}^d$ has area 1, the energy $F_p$ has the form
\begin{equation}
\label{N.1}
F_p(\nu) = \lambda \sum_{i=1}^M m_i^{\frac{d-1}{d}} + W_p(1,\nu)
\end{equation}
for some constant $\lambda > 0$, where $\nu = \sum_{i=1}^M m_i \delta_{x_i}$ with $m_i >0$, $\sum_{i=1}^M m_i = 1$, $x_i \in \overline{\Omega}$, $x_i \ne x_j$ if $i \ne j$.
The parameter $\lambda$, which comes from the rescaling, can also be thought of as modelling the repulsive strength between the A and B blocks.
This parameter was suppressed in the original energy \eqref{M.3}.

Note that $M$ is not fixed
\emph{a priori}.
The first term of $F_p$ is minimised when $M=1$, i.e., when $\nu$ consists of just one dirac mass (placed anywhere). The minimum value of the second term of $F_p$ converges to $0$ as $M \to \infty$ (since the Lebesgue measure can be approximated arbitrarily well by dirac masses). The parameter $\lambda$ and the competition between the two terms determines the value of $M$ for minimisers and the minimising patterns. For example,
when $\lambda$ is large the first term of $F_p$ dominates and the minimiser is $\nu = \delta_{x_1}$, where $x_1$
 satisfies equation \eqref{EL1}. In particular, $M=1$. As $\lambda$ decreases, $M$ increases. The scaling of the energy suggests that $M \sim \lambda^{-\frac{d}{p+1}}$.

%
%
%
%

\subsection{Minimisers when $\lambda=0$ and $M$ is fixed: Centroidal Voronoi Tessellations.}
\label{lambda=0}
To get an intuition for the problem we first consider the simpler problem where $\lambda=0$ and $M$ is prescribed. We also take $p=2$ to start with.
We assume that $\Omega$ is convex. This ensures that the Euler-Lagrange equations have a solution (see below).
In this case the energy reduces to
\[
F(\nu) = W_2(1,\nu)
\]
where $\nu = \sum_{i=1}^M m_i \delta_{x_i}$ for some $M \in \mathbb{N}$ fixed. (Note that $M$ must be fixed since otherwise $F$ has no minimum.)
This energy is well-studied in both the theoretical optimal transportation literature and the computational geometry literature (e.g.~\cite{DuFaberGunzburger99}) and has applications in optimal location problems (e.g., urban planning~\cite{LoschWoglomStolper54,ButtazzoSantambrogio,BouchitteJimenezMahadevan}), quantization (e.g., image compression and signal processing~\cite{Gersho79,Merigot11}) and data clustering (e.g., k-means clustering \cite{ImaiKatohInaba, DuFaberGunzburger99}). We briefly recall
how the problem of minimising $F$ can be converted into an optimal partitioning problem.

Let $\mathcal{T}_M$ be the set of partitions of $\Omega$ into $M$ sets:
\begin{equation}
\mathcal{T}_M = \left\{ \{U_i\}_{i=1}^M : U_i \subset \Omega, \; \bigcup_{i=1}^M U_i = \Omega, \; |U_i \cap U_j| = 0, i \ne j \right\}.
\end{equation}
Then $F$ can be written as
\begin{equation}
F(\nu) = \min_{\mathcal{T}_M} \left\{ \sum_{i=1}^{M} \int_{U_i} |x-x_i|^2 \, dx : |U_i|=m_i \; \forall \; i \right\}.
\end{equation}
Here $U_i = T^{-1}(\{x_i\})$ where $T$ is the optimal transportation map.
Therefore
\begin{equation}
\label{N.5}
\begin{aligned}
\min_\nu F(\nu) &= \min_{\{x_i, m_i\}_{i=1}^M} 
\min_{\mathcal{T}_M} \left\{ \sum_{i=1}^{M} \int_{U_i} |x-x_i|^2 \, dx : |U_i|=m_i \; \forall \; i \right\}
\\
& = \min_{\{x_i\}_{i=1}^M, \mathcal{T}_M} \; \sum_{i=1}^{M} \int_{U_i} |x-x_i|^2 \, dx.
\end{aligned}
\end{equation}
In other words, instead of minimising $F$ over points $\{ x_i \}_{i=1}^M$ and weights $\{ m_i \}_{i=1}^M$, we can minimise
\begin{equation}
\label{N.6}
G(\{x_i, U_i\}_{i=1}^M) = \sum_{i=1}^{M} \int_{U_i} |x-x_i|^2 \, dx
\end{equation}
over points $\{x_i\}_{i=1}^M$ and partitions $\{U_i\}_{i=1}^M$ of $\Omega$.
In particular
\begin{equation}
\min F(\nu) = \min G (\{x_i,U_i \})
\end{equation}
and if $\{x_i,U_i \}$ minimises $G$, then $\nu = \sum_i | U_i | \delta_{x_i}$ minimises $F$.

It is known that $G$ is minimised when the partition $\{ U_i \}_{i=1}^M$ is the Voronoi tessellation $\{ V_i \}_{i=1}^M$ generated by the points $\{ x_i \}_{i=1}^M$
and simultaneously each $x_i$ is the centre of mass of its own Voronoi cell:
\begin{align}
\label{N.7}
U_i & = V_i := \{ x \in \Omega : |x-x_i| \le |x-x_j| \; \forall \; j \ne i \},
\\
\label{N.8}
x_i & = \frac{1}{|V_i|} \int_{V_i} x \, dx,
\end{align}
for $i=1,\dots,M$.
If condition \eqref{N.7} is not satisfied it is easy to see that $G$ can be decreased by replacing the partition $\{U_i\}_{i=1}^M$
with the Voronoi tessellation $\{V_i\}_{i=1}^M$ generated by the points $\{x_i\}_{i=1}^M$.
Condition \eqref{N.8} is just the Euler-Lagrange equation $\partial G / \partial x_i = 0$ evaluated at $\{ U_i \} = \{ V_i \}$.
These special types of Voronoi tessellations $\{x_i,V_i\}$ satisfying \eqref{N.7} and \eqref{N.8} are known as \emph{Centroidal Voronoi Tessellations} (CVTs). See \cite{DuFaberGunzburger99} for a nice survey of CVTs.

The assumption that $\Omega$ is convex ensures that all the Voronoi cells are convex, since they are the intersection of $\Omega$ with half planes, and
 so the centroid of each Voronoi cell lies in $\Omega$. Without this assumption it is possible that \eqref{N.7} and \eqref{N.8} have no solution, i.e., that there exists no CVT of $M$ points of $\Omega$. This is the case, e.g., if $\Omega$ is an annulus and $M=1$. Under the assumption that $\Omega$ is convex, equations \eqref{N.7} and \eqref{N.8} always have a solution.

Note however that in general there is not a unique CVT of $M$ points -- see Figure~\ref{fig:TwoCVTs}.  In general, as $M \to \infty$, the lowest energy CVT of $M$ points tends to a hexagonal tiling. This and several extensions and  generalisations were proved in \cite{Fejes-Toth72,Gruber99,G-Fejes-Toth01,MorganBolton02}.
\begin{figure}[!h]
\centering
\includegraphics[height=3.5cm]{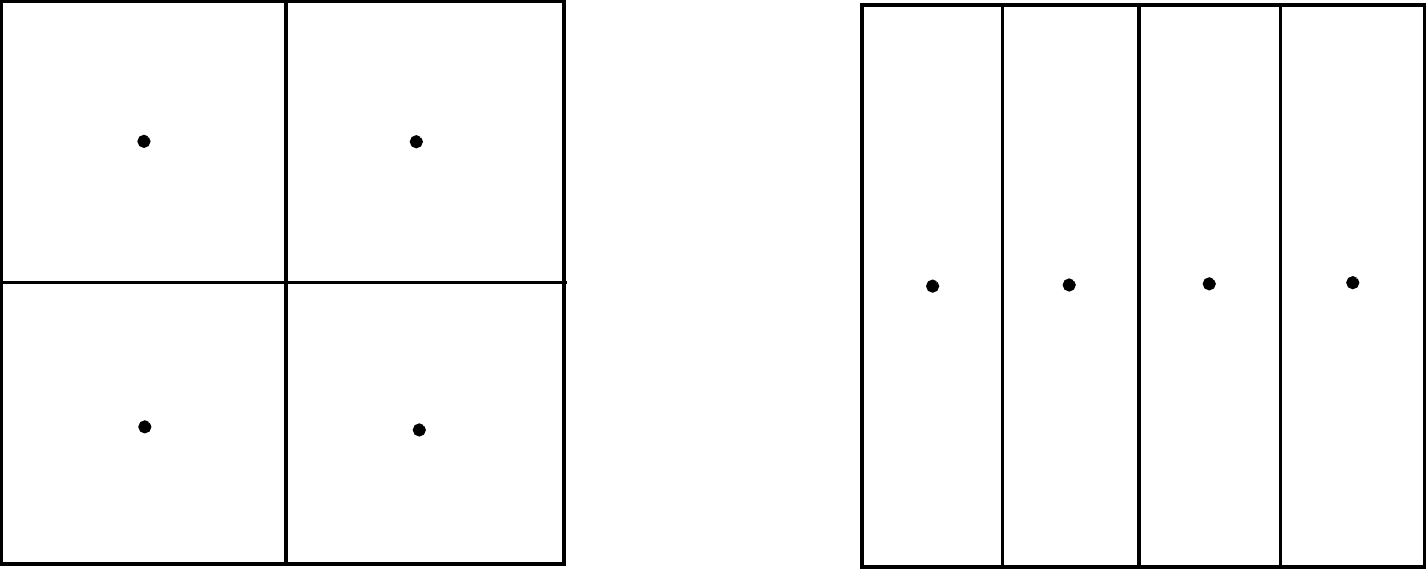}
\caption{Two partitions of the square: both are Centroidal Voronoi Tesselations of four points. The left one has lower energy $G$.}
\label{fig:TwoCVTs}
\end{figure}

CVTs can be computed using Lloyd's algorithm~\cite{Lloyd82}, which is a fixed-point method: Given an approximate set of points $\{ x_i^n \}_{i=1}^M$, compute the corresponding Voronoi diagram $\{ V_i^n \}_{i=1}^M$, and then define a new set of  points $\{ x_i^{n+1} \}_{i=1}^M$ to be the centres of mass of the Voronoi cells $\{ V_i^n \}_{i=1}^M$. Fixed points of this algorithm satisfy \eqref{N.7} and \eqref{N.8}. See \cite{DuEmelianenkoJu06, DuFaberGunzburger99} for more details and convergence theory.

For $\lambda>0$, minimisers of the original energy $F_2$ (defined in equation \eqref{N.1}) turn out to be very close to CVTs, as illustrated in Figure \ref{CVTs}. Therefore CVTs (which are easy to compute using Lloyd's algorithm) can be used to generate good approximate minimisers. Developing a convergent minimisation algorithm, however, requires a different approach, which we discuss in the following section.
\begin{figure}[!h]
\begin{center}
\includegraphics[width=0.85\textwidth]{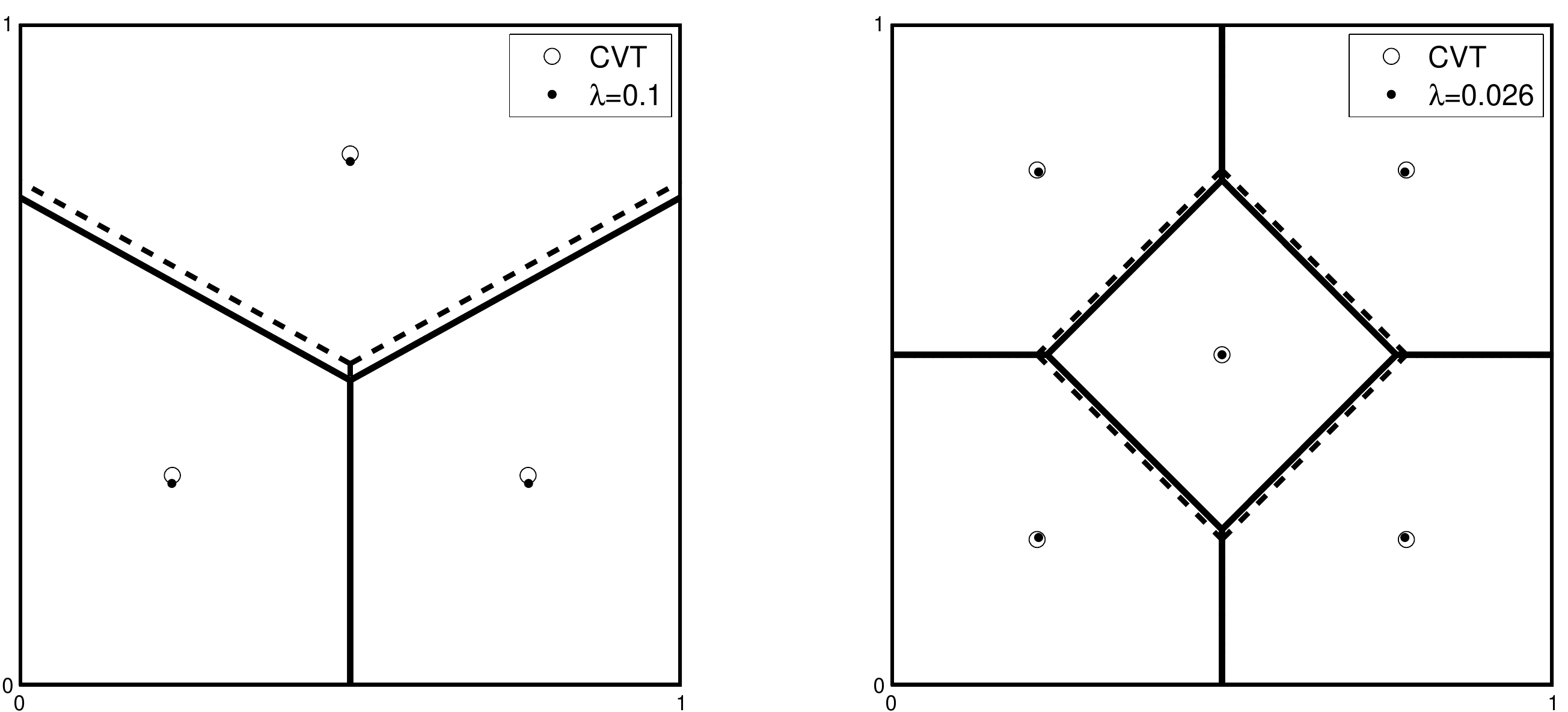}
\caption{\label{CVTs} {Minimisers of $F_2$ (defined in equation \eqref{N.1}) are close to being Centroidal Voronoi Tessellations (CVTs). Left: A CVT of three points (empty circles indicate the generators of the Voronoi cells, dashed lines indicate the boundaries of the Voronoi cells) and a minimiser $\nu$ of $F_2$ for $\lambda=0.1$ (solid circles indicate the support $\{x_i\}$ of $\nu$, solid lines indicate the boundaries of the transport regions). Right: A CVT of five points and a minimiser of $F_2$ for $\lambda=0.026$.}}
\end{center}
\end{figure}

\begin{Remark}[The case $p \in [1,\infty)$]
\label{p ne 2}
A large part of the discussion above holds for the general case $p \in [1,\infty)$. In particular, minimising $W_p(1,\nu)$ with respect to $\nu = \sum_{i=1}^M m_i \delta_{x_i}$ for $M$ fixed is equivalent to minimising
\begin{equation}
\nonumber
G_p(\{x_i, U_i\}_{i=1}^M) = \sum_{i=1}^{M} \int_{U_i} |x-x_i|^p \, dx
\end{equation}
over points $\{x_i\}_{i=1}^M$ and partitions $\{U_i\}_{i=1}^M$ of $\Omega$. As above, minimisers of $G_p$ are Voronoi diagrams
$\{x_i, V_i\}_{i=1}^M$, but this time
the generators $\{ x_i \}_{i=1}^M$ satisfy
\begin{equation}
\nonumber
\frac{\partial G_p}{\partial x_i} = 0 \iff \int_{V_i} (x_i-x) |x_i-x|^{p-2} \, dx = 0.
\end{equation}
These generalised CVTs are more difficult to compute than standard CVTs (the case $p=2$), but in principle Lloyd's algorithm could be modified
as follows: Given an approximate set of points $\{ x_i^n \}_{i=1}^M$, compute the corresponding Voronoi diagram $\{ V_i^n \}_{i=1}^M$, and then define the new set of points $\{ x_i^{n+1} \}_{i=1}^M$ to be the solutions of
\begin{equation}
\label{p centroid}
\int_{V^n_i} (x^{n+1}_i-x) |x^{n+1}_i-x|^{p-2} \, dx = 0, \quad i \in \{ 1, \ldots, M \}.
\end{equation}
The difficulty is that for general $p$ this equation is much harder to solve than for $p=2$. One option would be to use a numerical method. Another option would be to lag the nonlinear factor in \eqref{p centroid}, i.e., replace the factor $|x^{n+1}_i-x|^{p-2}$ with $|x^{n}_i-x|^{p-2}$, to obtain
\begin{equation}
\nonumber
 x_i^{n+1} = \dfrac{\displaystyle \int_{V_i^n} x |x^{n}_i-x|^{p-2} \, dx}{\displaystyle \int_{V_i^n} |x^{n}_i-x|^{p-2} \, dx}.
\end{equation}
A fixed point $\{x_i,V_i\}_{i=1}^M$ of this method is a stationary point of $G_p$, and the corresponding measure $\nu = \sum_{i=1}^M |V_i| \delta_{x_i}$
is a stationary point of $W_p(1,\nu)$.
\end{Remark}

\begin{Remark}[The case $d=1$ and $\lambda > 0$]
\label{Remark d=1}
For the 1-dimensional case, $d=1$, the functional $F_p$ defined in equation \eqref{N.1} reduces to
\begin{equation}
\nonumber
F_p(\nu) = \lambda M + W_p(1,\nu).
\end{equation}
The method given above for the case $\lambda=0$ can be applied to minimise this energy for all $\lambda > 0$, in which case the value of $M$ is not fixed \emph{a priori}; it is determined by $\lambda$. As above, stationary points of $F_p$ are Centroidal Voronoi Tesselations. In one dimension these are just
uniform partitions of the interval $\Omega$ with the points $x_i$ at the centres of the partitions. For example, if $\Omega = [0,1]$, then the stationary
points of $F_p$ are $\nu = \sum_{i=1}^M \frac 1M \delta_{x_i}$ with $x_i = \frac{1}{2M}+\frac{i-1}{M}$, for all $M \in \mathbb{N}$. These have energy
\begin{equation}
\nonumber
F_p(\nu) = \lambda M + \sum_{i=1}^M \int_{\tfrac{i-1}{M}}^{\tfrac iM} \left| x - \left( \tfrac{1}{2M}+\tfrac{i-1}{M} \right) \right|^p \, dx
= \lambda M + \frac{2^{-p}}{p+1} M^{-p}.
\end{equation}
We can find the global minimiser of $F_p$ by minimising this over $M$. This gives
\begin{equation}
\nonumber
M = \left( \frac{2^p (p+1) \lambda}{p} \right)^{-\frac{1}{p+1}}.
\end{equation}
Since $M$ is an integer, the optimal value is obtained by rounding this expression up or down, depending on the corresponding value of the energy.
We see that the optimal number of masses $M$ scales like $\lambda^{-\frac{1}{p+1}}$. In the following section we treat the case $\lambda > 0$
for $d>1$.
\end{Remark}

%
%
%
%

\subsection{Minimisers when $\lambda >0$: Centroidal Power Diagrams.}
\label{CPD}
Minimising the energy $F_p$ numerically is challenging, not only because it has infinitely many local minimisers, but also because it is challenging even to evaluate $F_p(\nu)$ for a given $\nu$; from the Kantorovich formulation of the $p$-Wasserstein cost $W_p(1,\nu)$, given in terms of measures on $\Omega \times \Omega$ with marginals $1$ and $\nu$, we see that evaluating $W_p(1,\nu)$ is
equivalent to solving an infinite-dimensional linear programming problem.
For the case $\lambda=0$ studied in the previous section this could be avoided. We will show that it can also be avoided for $\lambda>0$ by
rewriting the energy in new coordinates.

\begin{Definition}[$p$-power diagrams]
\label{Def}
Let $\{ x_i , w_i\}_{i=1}^M$ be a set of weighted points, $x_i \in \Omega$, $w_i \in \mathbb{R}$. To this we can associate a type of generalised Voronoi diagram $\{ P_i \}_{i=1}^M$: For $i \in \{1, \ldots, M\}$, define
\[
P_i = \{ x \in \Omega : |x-x_i|^p - w_i \le |x-x_j|^p - w_j \; \forall \; j \ne i\}.
\]
This gives a partition of $\Omega$, which we refer to as a $p$-\emph{power diagram}. We refer to the sets $P_i$ as $p$-\emph{power cells}.
\end{Definition}
If $p=2$ this is just the standard \emph{power diagram} and $P_i$ are convex polygons. For general $p$ the cells $P_i$ are not convex and their boundaries are not straight lines, unless all the weights $w_i$ are equal, in which case $\{ P_i \}_{i=1}^M$ is just a standard \emph{Voronoi diagram}. The $1$-power diagram is known in the literature as the \emph{Apollonius diagram}, \emph{hyperbolic Dirichlet tessellation}, \emph{additively weighted Voronoi diagram}, or \emph{Voronoi diagram of discs}. For general $p$ there does not seem to be a standard name and hence we call them $p$-power diagrams. These fall into the class of \emph{generalised Dirichlet tessellations} or \emph{generalised additively weighted Voronoi diagrams} studied in \cite{AshBolker86}. For a comprehensive treatment of generalised Voronoi diagrams see \cite{OkabeBootsSugiharaChiu}.

The following proposition generalises \cite[Theorem 1]{Merigot11} and \cite{AurenhammerHoffmannAronov98} from the case $p = 2$ to all $p \in [1,\infty)$:
\begin{Proposition}[Characterisation of transport regions as $p$-power diagrams.]
\label{char}
Let $\rho \in L^1(\Omega;(0,\infty))$ and let $\nu = \sum_{i=1}^M m_i \delta_{x_i}$ with $x_i \in \Omega$, $m_i \ge 0$, $\sum_i m_i = \int_\Omega \rho \, dx$.
Fix $p \in [1,\infty)$. Let $\{ U_i\}_{i=1}^M$ be the optimal transport regions for $W_p(\rho \, dx,\nu)$, i.e.,
\[
W_p(\rho \, dx,\nu) = \sum_{i=1}^M \int_{U_i} |x-x_i|^p \rho \, dx
\]
and $\int_{U_i} \rho \, dx = m_i$. Let $(\phi,\psi)$ be an optimal Kantorovich potential pair for
$W_p(\rho \, dx,\nu)$, i.e.,
\[
W_p(\rho \, dx,\nu) = \int_{\Omega} \phi (x) \rho \, dx + \sum_i m_i \psi(x_i)
\]
and
\begin{equation}
\label{ineq:prop-Kant-pair}
\phi(x) + \psi(x_i) \le |x-x_i|^p\qquad \text{for a.e. $x \in \Omega$ and all $i \in \{1, \ldots, M \}$}.
\end{equation}
Then
\begin{itemize}
\item[(i)]
$\{ U_i \}_{i=1}^M$ is the $p$-power diagram with generators $x_i$ and weights $ \psi(x_i) $:
\[
U_i = \{ x \in \Omega : |x-x_i|^p - \psi(x_i) \le |x-x_j|^p - \psi(x_j) \; \forall \; j \ne i\}.
\]
\item[(ii)]
If $\{P_i \}_{i=1}^M$ is a power diagram with generators $\{ x_i \}_{i=1}^M$ and weights $\{ w_i \}_{i=1}^M$, then
\[
W_p \left( \rho \, dx, \sum_{i} |P_i| \delta_{x_i} \right) = \sum_i \int_{P_i} |x-x_i|^p \rho \, dx.
\]
\end{itemize}
\end{Proposition}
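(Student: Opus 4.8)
The plan is to prove both parts through Kantorovich duality, establishing (ii) first and then deducing (i) from a complementary-slackness argument. Throughout I write $c(x,x_i)=|x-x_i|^p$ and $m_i=\int_{P_i}\rho\,dx$ (in the application $\rho\equiv1$, so $m_i=|P_i|$ and the statement is consistent as written), and I use that the Kantorovich dual of $W_p(\rho\,dx,\nu)$ is
\[
W_p(\rho\,dx,\nu)=\sup_{(\phi,\psi)}\Bigl\{\int_\Omega\phi\,\rho\,dx+\sum_i m_i\psi(x_i)\ :\ \phi(x)+\psi(x_i)\le c(x,x_i)\Bigr\},
\]
with the supremum attained. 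The key structural observation is that once the values $\psi(x_i)$ are fixed, the optimal $\phi$ is forced to be the $c$-transform $\phi(x)=\min_i\bigl(c(x,x_i)-\psi(x_i)\bigr)$, and the region on which the $i$-th term achieves this minimum is precisely the $p$-power cell with generator $x_i$ and weight $\psi(x_i)$. Since $\nu$ is atomic, this observation holds uniformly in $p\in[1,\infty)$ and avoids any appeal to the existence or uniqueness of an optimal transport \emph{map}.

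For part (ii) I would first exhibit a feasible primal competitor: set $T(x)=x_i$ for $x\in P_i$. Because $\int_{P_i}\rho\,dx=m_i$, the map $T$ pushes $\rho\,dx$ forward to $\nu$, giving the upper bound $W_p(\rho\,dx,\nu)\le\sum_i\int_{P_i}c(x,x_i)\rho\,dx$. For the reverse inequality I would exhibit a matching dual competitor: put $\psi(x_i)=w_i$ and $\phi(x)=\min_j\bigl(c(x,x_j)-w_j\bigr)$. Admissibility $\phi(x)+\psi(x_i)\le c(x,x_i)$ is immediate from the definition of $\phi$. By the definition of the power cell, for $x\in P_i$ the minimum defining $\phi$ is attained at $j=i$, so $\phi(x)=c(x,x_i)-w_i$ on $P_i$; substituting into the dual functional and using $m_i=\int_{P_i}\rho\,dx$, the $-w_i$ contributions cancel against $\sum_i m_i w_i$, so the dual value equals $\sum_i\int_{P_i}c(x,x_i)\rho\,dx$. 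Weak duality then squeezes $W_p(\rho\,dx,\nu)$ between two equal quantities, proving both the cost formula and the optimality of the partition $\{P_i\}$.

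For part (i) I would exploit complementary slackness between the given optimal pair $(\phi,\psi)$ and the optimal regions $\{U_i\}$: since $\int_{U_i}\rho\,dx=m_i$ and $U_i$ is transported onto $x_i$ optimally, equality must hold in \eqref{ineq:prop-Kant-pair} for $\rho$-a.e.\ $x\in U_i$, i.e.\ $\phi(x)+\psi(x_i)=c(x,x_i)$. Combining this with the admissibility inequality $\phi(x)+\psi(x_j)\le c(x,x_j)$ for every $j$ yields, for a.e.\ $x\in U_i$,
\[
c(x,x_i)-\psi(x_i)\le c(x,x_j)-\psi(x_j)\qquad\text{for all }j,
\]
which is exactly the defining inequality of the $p$-power cell with generators $x_i$ and weights $\psi(x_i)$. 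Hence $U_i$ is contained, up to a null set, in the power cell. Since $\rho>0$ a.e.\ and both $\{U_i\}$ and the power diagram partition $\Omega$ up to null sets, comparing total masses turns these inclusions into equalities $U_i=P_i$ up to null sets, which is the claim.

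The step requiring the most care is the treatment of the cell boundaries, i.e.\ the tie sets $\{x:c(x,x_i)-w_i=c(x,x_j)-w_j\}$, where the assignment is ambiguous and where complementary slackness pins nothing down. The point is that for distinct generators $x_i\ne x_j$ each such set is a Lebesgue-null hypersurface, so it carries no $\rho\,dx$ mass (as $\rho\in L^1$ and is a.e.\ positive); consequently the $p$-power diagram is a genuine partition up to null sets and all of the ``almost everywhere'' statements above are legitimate. This negligibility of the tie sets, together with the duality identity, is precisely what makes the whole argument valid for every $p\ge1$, including the delicate case $p=1$.
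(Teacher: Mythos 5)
Your proof is correct, and it rests on the same two ingredients as ours --- the pointwise minimality defining the $p$-power cells and the Kantorovich admissibility inequality \eqref{ineq:prop-Kant-pair} --- but it packages them differently. For (ii) we sandwich the cost between the optimal partition $\{U_i\}$ and $\{P_i\}$ using the partition inequality \eqref{ineq:SiPi}, which presupposes that an optimal family of transport regions exists for the target $\sum_i|P_i|\delta_{x_i}$; you instead exhibit an explicit primal competitor (the map sending $P_i$ to $x_i$) and an explicit dual competitor ($\psi(x_i)=w_i$, $\phi=\min_j(|x-x_j|^p-w_j)$) whose values coincide, so weak duality does the rest. This buys you something genuine: no appeal to the existence of an optimal transport map or partition, which is the more delicate point when $p=1$. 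For (i) we run a single chain of inequalities that collapses to equality, while you argue by complementary slackness to get the pointwise inclusion $U_i\subseteq P_i$ up to null sets and then upgrade to equality by comparing masses; these are logically equivalent, and both versions ultimately lean on the same unproved-but-true fact that the tie sets $\{|x-x_i|^p-w_i=|x-x_j|^p-w_j\}$ are Lebesgue-null for $x_i\neq x_j$ (in our write-up this is hidden in the equality case of \eqref{ineq:SiPi}; you at least flag it explicitly, though a one-line justification --- e.g.\ that these are level sets of non-constant real-analytic functions away from the generators --- would close that last gap for general $p$). One small bookkeeping point you handled correctly: for general $\rho$ the target in (ii) should carry masses $\int_{P_i}\rho\,dx$ rather than $|P_i|$; the statement as written is tailored to $\rho\equiv1$.
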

\begin{proof}
(i) Let $P_i = P_i( \{ x_j, \psi(x_j) \}_{j=1}^M )$ be the $i$-th $p$-power cell generated by $\{ x_j, \psi(x_j) \}_{j=1}^M$.
First note that if $\{ S_i \}_{i=1}^M$ is any partition of $\Omega$, then
\begin{equation}
\label{ineq:SiPi}
\sum_{i=1}^M \int_{S_i} \left[ |x-x_i|^p - \psi(x_i) \right] \rho \, dx
\ge
\sum_{i=1}^M \int_{P_i} \left[ |x-x_i|^p - \psi(x_i) \right] \rho \, dx
\end{equation}
with equality if and only if $S_i=P_i$ for all $i$ (up to sets of Lebesgue measure zero). This follows from the definition of the $p$-power cells $P_i$. Then
\begin{eqnarray*}
W_p(\rho \, dx, \nu) & = &\sum_{i=1}^M \int_{U_i} |x-x_i|^p \rho \, dx
\\
& = &\sum_{i=1}^M \left\{ \int_{U_i} \left[ |x-x_i|^p - \psi(x_i) \right] \rho \, dx + m_i \psi(x_i) \right\}
\\
& \stackrel{\eqref{ineq:SiPi}}\ge &\sum_{i=1}^M \left\{ \int_{P_i} \left[ |x-x_i|^p - \psi(x_i) \right] \rho \, dx + m_i \psi(x_i) \right\}
\\
&\stackrel{\eqref{ineq:prop-Kant-pair}} \ge& \sum_{i=1}^M \left\{ \int_{P_i} \phi(x) \rho \, dx + m_i \psi(x_i) \right\}
\\
& = &W_p(\rho \, dx, \nu).
\end{eqnarray*}
Therefore all the inequalities above are equalities and so $U_i=P_i$ for all $i$, as required.

(ii) Now let $\{ P_i \}_{i=1}^M$ be the $p$-power diagram with generators $\{ x_i \}_{i=1}^M$ and weights $\{ w_i \}_{i=1}^M$. Let $U_i$
be the optimal partition for $W_p \left( \rho \, dx, \sum_{i} |P_i| \delta_{x_i} \right)$. Then $|U_i|=|P_i|$ and
\begin{eqnarray*}
W_p \left( \rho \, dx, \sum_{i} |P_i| \delta_{x_i} \right) & = &\sum_{i=1}^M \int_{U_i} |x-x_i|^p \rho \, dx
\\
& = &\sum_{i=1}^M \left\{ \int_{U_i} \left[ |x-x_i|^p - w_i \right] \rho \, dx + |U_i| w_i \right\}
\\
& \stackrel{\eqref{ineq:SiPi}}\ge &\sum_{i=1}^M \left\{ \int_{P_i} \left[ |x-x_i|^p - w_i \right] \rho \, dx + |U_i| w_i \right\}
\\
& = &\sum_{i=1}^M \int_{P_i} |x-x_i|^p \rho \, dx
\\
& \ge& W_p \left( \rho \, dx, \sum_{i} |P_i| \delta_{x_i} \right)
\end{eqnarray*}
since $\{ P_i \}_{i=1}^M$ is an admissible partition for $W_p \left( \rho \, dx, \sum_{i} |P_i| \delta_{x_i} \right)$. Therefore
the inequalities above are equalities, yielding the desired result.
\end{proof}

The following theorem generalises \cite[Lemma 1]{BournePeletierTheil} from the case $p = 2$ to all $p \in [1,\infty)$:
\begin{Theorem}[Global minimisers of $F_p$ are centroidal $p$-power diagrams.]
\label{GlobalMin}
Let $\nu = \sum_{i=1}^M m_i \delta_{x_i}$ be a global minimiser of $F_p$ $($defined in equation \eqref{N.1}$)$ such that $x_i \in \Omega$ for all $i$, i.e., $x_i \notin \partial \Omega$. Let $\{ P_i \}_{i=1}^M$ be the $p$-power diagram with generators $\{ x_i \}_{i=1}^M$ and weights
\[
w_i = - \lambda \frac{(d-1)}{d} m_i^{-\frac{1}{d}}, \qquad i \in \{ 1, \ldots , M \}.
\]
Then, for each $i$, $x_i$ is the $p$-centroid of $P_i$ and $m_i$ is the $d$-dimensional Lebesgue measure of $P_i$:
\begin{equation}
\label{NLE}
\int_{P_i} (x_i-x) |x_i-x|^{p-2} \, dx = 0, \qquad m_i = |P_i|.
\end{equation}
\end{Theorem}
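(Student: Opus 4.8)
The plan is to combine the two Euler-Lagrange equations (Propositions~\ref{Prop1} and~\ref{Prop2}) with the power-diagram characterisation of optimal transport regions (Proposition~\ref{char}), the only genuinely new observation being that a $p$-power diagram is unchanged when one adds a single constant to \emph{all} of its weights.

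First I would apply Proposition~\ref{char}(i) with $\rho\equiv 1$. Let $(\phi_*,\psi_*)$ be an optimal Kantorovich pair for $W_p(1,\nu)$ and let $V_i=T^{-1}(x_i)$ be the optimal transport regions. The proposition tells us that $\{V_i\}_{i=1}^M$ is \emph{exactly} the $p$-power diagram generated by the points $\{x_i\}$ with weights $\psi_*(x_i)$, and since the marginal of the transport plan is $\nu$ we also have $|V_i|=m_i$. Next I would invoke the weight Euler-Lagrange equation. Because the energy here is~\eqref{N.1}, with prefactor $\lambda$ in place of the $d\,\alpha(d)^{1/d}$ appearing in Theorem~\ref{Gamma}, the computation of Proposition~\ref{Prop2} yields, with the rescaled coefficient,
\[
\lambda\,\frac{d-1}{d}\,m_i^{-\frac1d}+\psi_*(x_i)=C,
\]
with $C$ independent of $i$; equivalently $\psi_*(x_i)=w_i+C$ for the weights $w_i=-\lambda\frac{d-1}{d}m_i^{-1/d}$ given in the statement.

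The key step is then to observe that the defining inequalities of a $p$-power cell,
\[
|x-x_i|^p-\psi_*(x_i)\le|x-x_j|^p-\psi_*(x_j)\quad\text{for all }j\neq i,
\]
are invariant under replacing each $\psi_*(x_i)$ by $\psi_*(x_i)-C=w_i$, since the common constant $C$ cancels on both sides. Hence the $p$-power diagram $\{P_i\}$ with weights $w_i$ coincides (up to null sets) with the $p$-power diagram with weights $\psi_*(x_i)$, which by the first step equals $\{V_i\}$. Thus $V_i=P_i$ for every $i$, giving immediately $m_i=|V_i|=|P_i|$, the second identity in~\eqref{NLE}. Finally, Proposition~\ref{Prop1} (applicable since $x_i\in\Omega$) gives $\int_{V_i}(x_i-x)|x_i-x|^{p-2}\,dx=0$; substituting $V_i=P_i$ yields the $p$-centroid condition, completing~\eqref{NLE}.

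The main obstacle is not analytic but a matter of careful bookkeeping: one must track the Lagrange-multiplier constant $C$ correctly through the two separate variational computations (position and weight) and recognise that this constant is precisely the ambiguity absorbed by the translation invariance of $p$-power diagrams in their weights. No estimate beyond what Propositions~\ref{Prop1}, \ref{Prop2} and~\ref{char} already provide is required; in particular the delicate uniqueness and continuity of the Kantorovich potentials needed for Proposition~\ref{Prop2} has already been established in Lemma~\ref{lemma:convergence-Kant-pot}.
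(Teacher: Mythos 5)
Your proposal is correct and follows exactly the route the paper intends: the paper's entire proof is the single line ``This follows immediately from Propositions \ref{Prop1}, \ref{Prop2} and \ref{char}'', and your write-up simply makes explicit the bookkeeping (the Lagrange-multiplier constant being absorbed by the invariance of a $p$-power diagram under a common shift of all weights) that the paper leaves implicit. No discrepancy to report.
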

\begin{proof}
This follows immediately from Propositions \ref{Prop1}, \ref{Prop2} and \ref{char}.
\end{proof}

Observe that \eqref{NLE} is a pair of nonlinear equations for global minimisers $\{x_i, m_i \}_{i=1}^M$. The rest of this section is devoted to the numerical solution of these equations.
For implementation purposes we will focus on the case $p=2$ since $2$-power diagrams (which are just power diagrams) are easy to compute. We also limit our attention to two space dimensions, $d=2$, and to convex domains $\Omega$.

By Proposition \ref{char}, minimising $F_2$ over atomic measures $\nu$ is equivalent to minimising the following energy $E$ over sets of points and weights $\{ x_i,w_i \}$:
\begin{equation}
\label{N.9}
E(\{x_i, w_i\}) := F_2 \left( \sum_{i} |P_i| \delta_{x_i} \right) = \sum_i \left\{ \lambda \sqrt{|P_i|} + \int_{P_i} |x-x_i|^2 \, dx \right\},
\end{equation}
where $\{P_i\}$ is the power diagram generated by $\{x_i,w_i\}$. The energy $E$ can be evaluated to machine precision, whereas evaluating the original energy $F_2$ directly involves solving an infinite-dimensional linear programming problem.

\paragraph{A generalised Lloyd algorithm.}
By Theorem \ref{GlobalMin}, global minimisers of $E$ (which correspond to global minimisers of $F_2$) are centroidal power diagrams and are fixed points of the following algorithm:
Given an approximate set of points and weights $\{ x_i^n,w_i^n \}_{i=1}^M$, first compute the corresponding power diagram $\{ P_i^n \}_{i=1}^M$.
(If any of the power cells $P_i^n$ are empty, then delete the corresponding points and weights and update $M$.)
Then define a new set of points and weights $\{ x_i^{n+1},w_i^{n+1} \}_{i=1}^M$ by
\begin{equation}
\label{algorithm}
\begin{aligned}
x_i^{n+1} & = \frac{1}{|P^n_i|} \int_{P_i^n} x \, dx, \\
w_i^{n+1} & = - \frac 12 \lambda |P^n_i|^{-\frac 12}.
\end{aligned}
\end{equation}
This is a generalisation of Lloyd's algorithm, which we described in Section \ref{lambda=0}.
The assumption that $\Omega$ is convex ensures that $x_i^{n+1}$ lies in $\Omega$.
Note that fixed points of this algorithm are not necessarily global minima. Below we discuss the implementation issue of how to find global minima as opposed to only local minima. It is easy to generalise this algorithm to $p \in [1,\infty)$, although harder to implement on a computer since $p$-power diagrams are more difficult to compute.

Figure \ref{Minimisers} (in Section \ref{Intro}) shows minimisers of $F_2$ for decreasing values of $\lambda$ computed using this method.
Observe that as $\lambda \to 0$ the power diagrams tend towards a hexagonal tiling of $\Omega$, and the generators $x_i$ tend to a triangular lattice. In Section \ref{P} we will see that this limiting behaviour can be proved rigorously.

We will extend this algorithm to a more general class of optimal location problems, study convergence, and implement it in three dimensions in a forthcoming paper \cite{BourneRoper}.

\paragraph{Implementation: Searching for global minimisers.}
Global minimisers of $E$ as $\lambda\rightarrow 0$ asymptotically approach a regular hexagonal tiling of $\Omega$ (see Section \ref{P}). Therefore if we assume that boundary effects are small then the interior of $\Omega$ should be approximately tiled with hexagons of an appropriate size when $\lambda$ is small. For a single regular hexagon with diameter $D$, with generator placed at the centre, the area is $A_D=3\sqrt{3}D^2/8$, the 2-Wasserstein cost is $C_D=5\sqrt{3}D^4/128$, and the total energy is
\begin{equation}
\label{eqn:epera}
e_D=\lambda3^{3/4}2^{-3/2}D+3^{1/2}2^{-7}5D^4.
\end{equation}
The total energy per unit area is $e_D/A_D=\lambda3^{-3/4}2^{3/2}D^{-1}+5D^2/48$. Minimising this gives an optimal diameter $\tilde{D}=3^{1/12}2^{3/2}5^{-1/3}\lambda^{1/3}$ and correspondingly $A_{\tilde{D}}^{-1}\lambda^{2/3}=5^{2/3}3^{-5/3}\approx 0.4685737$. Therefore the total energy of $|\Omega| A_{\tilde{D}}^{-1}$ hexagons each with energy $e_{\tilde{D}}$ is
\begin{equation}
\label{eqn:totale}
E_{\tilde{D}}= e_{\tilde{D}} |\Omega| A_{\tilde{D}}^{-1} = \frac{1}{2}5^{1/3}3^{1/6}\lambda^{2/3}|\Omega|.
\end{equation}
As $\lambda\rightarrow 0$ we expect the number of cells of a global minimising state to be approximately $M_g:=|\Omega|A_{\tilde{D}}^{-1}$. The energy landscape is extremely flat and the number of stable local minima increases as $\lambda\rightarrow 0$. Our algorithm is energy decreasing \cite{BourneRoper} but there is no guarantee that the state to which it converges is a global minimum. We employ a crude genetic algorithm in an attempt to find a global minimum.

For $\lambda>0$, we estimate the number of cells of a global minimum to be $M_g$. For each integer $M$ in the interval
$I =[M_g- C\lambda^{-2/3},M_g+ C\lambda^{-2/3}]$,
 where $C$ is chosen based on experiments, we distribute $M$ points $x_i$ uniformly in $\Omega$ with initial weights $w_i=0$. For each integer value of $M \in I$ we generate $N_r$ such random states and use \eqref{algorithm} to find a local minimum. After a prescribed number of iterations the collection of results are sorted in order of increasing energy and states that have both the same energy (to a certain tolerance) and the same number of cells are factored out. Then the remaining states are improved using \eqref{algorithm} and the process repeated until successive iterates differ in their energy by less than a prescribed tolerance. In this way we pursue only the best candidates. This method is crude, but gives reasonable results for moderate $\lambda$.

Figure \ref{fig:globalalg} shows the result of applying our algorithm with $\lambda=0.005$ in the unit square. Several such experiments were performed and gave the same five lowest energy states, with differences in the sixth state. In all experiments for $\lambda=0.005$ the same lowest energy state was obtained, this state is shown in Figure \ref{Minimisers} (top right). As can be seen from Figure \ref{fig:globalalg}, the energies of the local minimisers are very close and so the number of random initial states that must be tested increases as $\lambda$ gets small.
\begin{figure}
\begin{center}
\includegraphics{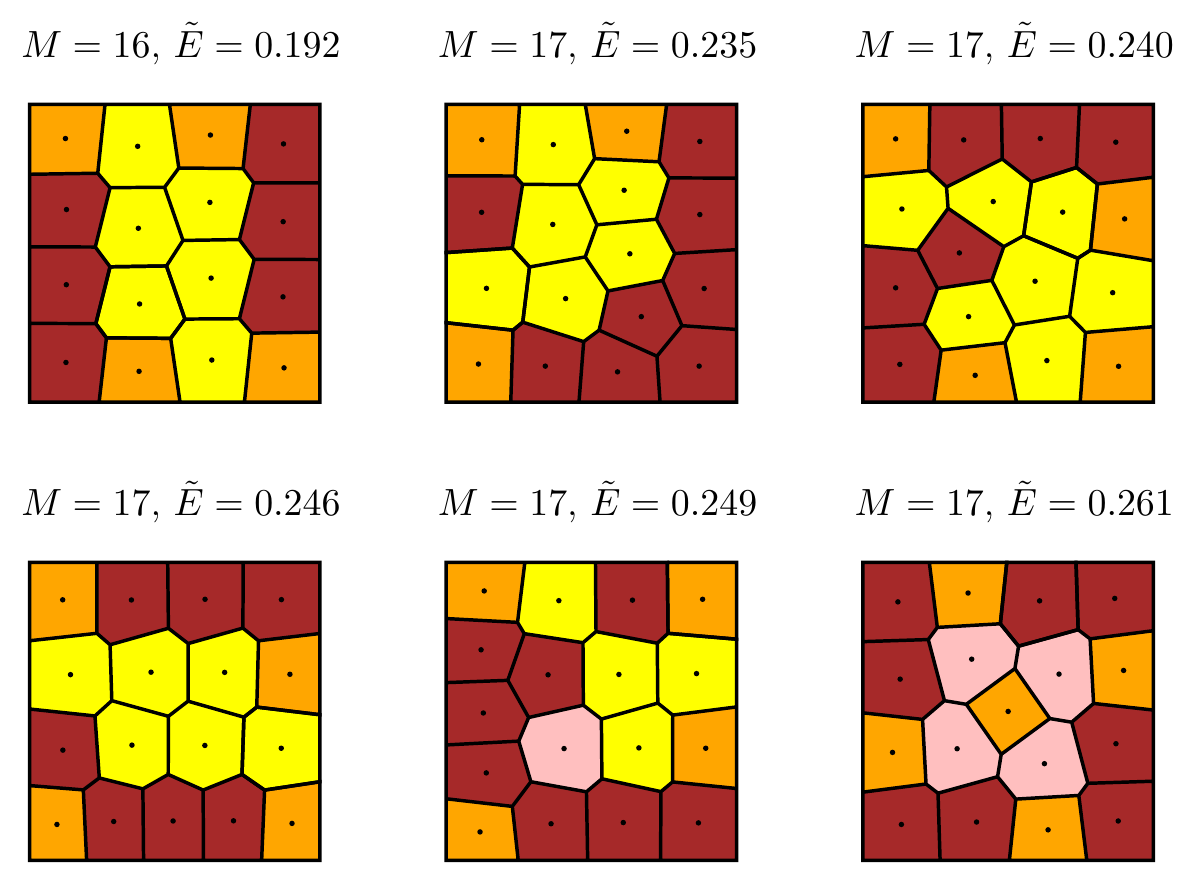}
\end{center}
\caption{\label{fig:globalalg} The six lowest energy states with $\lambda=0.005$, using $N_r=250$. The cells are coloured according to the number of sides. The energy $\tilde{E}$ is a rescaled version of the energy $E$: $\tilde{E}=(E-E_{\tilde{D}})/e_{\tilde{D}}$, where $E_{\tilde{D}}$ is defined in \eqref{eqn:totale} and $e_{\tilde{D}}$ is defined in \eqref{eqn:epera}.}
\end{figure}

For small $\lambda$, it is difficult to find a global minimum and experiments show that the lowest energy states calculated by our genetic algorithm are characterised by different `grains' -- regions of regular hexagonal tiling -- that intersect at grain boundaries.
See Figure \ref{fig:grains}.
Distortions of the regular hexagonal pattern from both the grain boundaries and the boundary of $\Omega$ decay over a length of two of three cells.

\begin{figure}[h]
\begin{center}
\includegraphics{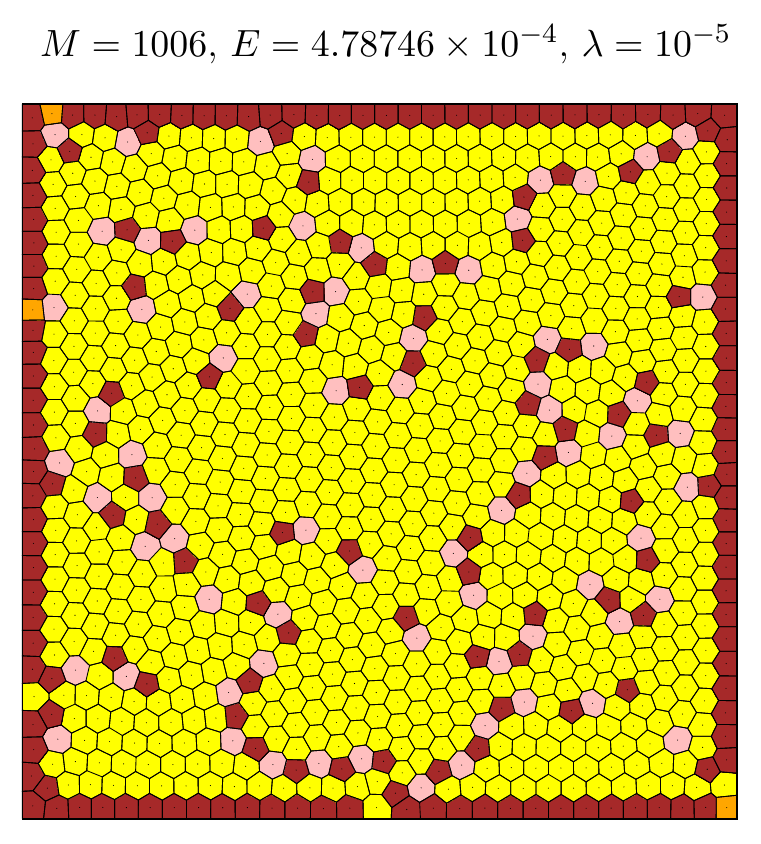}
\end{center}
\caption{\label{fig:grains} `Grains' of hexagonal tiling. This figure was produced from a completely random (uniformly distributed) initial state using 1000 iterations of the generalised Lloyd algorithm given in equation \eqref{algorithm}. Colouring indicates number of neighbours or sides: yellow polygons have six sides, pink seven, red five, and orange four.}
\end{figure}

\begin{Remark}[Alternative algorithm]
For the case $p=2$, an alternative strategy for minimising $F_2(\nu)$ is the following: Approximate the Lebesgue measure $1$ in $W_2(1,\nu)$ with an atomic measure $\mu$ representing a Gaussian quadrature rule of degree $2$ on $\Omega$; degree $2$ is specially tailored to the quadratic transport cost $|x-y|^2$. Then the value of $W_2(1,\nu)$ can be well approximated by $W_2(\mu,\nu)$, which is a \emph{finite-dimensional} linear programming problem. Therefore, while $F_2(\nu)$ cannot be evaluated exactly, it can be approximated well by $\lambda \sum \sqrt{m_i} + W_2(\mu,\nu)$, and a standard nonlinear optimisation algorithm could be used to minimise this. Even with this sensible choice of discretisation, however, the method given above is much faster and more accurate.
\end{Remark}

%
%
%
%
%
%
%
%

\section{Exact Characterisation of Minimisers in 2D}
\label{P}
Let $d=2$ and let $\nu_\lambda$ be a minimiser of $F_2$ (defined in equation \eqref{N.1}). The numerical results in the previous section suggest that, as $\lambda \to 0$, the support $\{ x_i \}_{i=1}^{M_\lambda}$ of $\nu_\lambda$ tends to a triangular lattice, and the associated transport regions $V_i$ tend to a regular hexagonal tiling of $\Omega$. A precise statement of this is proved in the companion paper
\cite{BournePeletierTheil}. Here we just give a rough statement.
Let $\Omega \subset \mathbb{R}^2$ have area 1.
We rescale $\Omega$ to obtain a domain $\Omega_\lambda$ that blows up as $\lambda \to 0$:
\[
\Omega_\lambda := \left( \frac{2 c_6}{\lambda} \right)^\frac{1}{3} \Omega
\]
where $c_6 := \frac{5 \sqrt{3}}{54}$ is the cost of transporting the Lebesgue measure restricted to a unit area regular hexagon onto a dirac mass located as its centre. Under this rescaling the energy $F_2$ becomes, up to a factor $\lambda^{4/3}(2 c_6)^{-4/3}$, the following:
\[
\tilde{F_2}(\nu) = 2 c_6 \sum_{i=1}^M \sqrt{m_i} + W_2(1,\nu)
\]
where $\nu = \sum_{i=1}^M m_i \delta_{x_i}$ is a measure on the rescaled domain $\Omega_\lambda$.
Then \cite[Theorems 1--4]{BournePeletierTheil} can be stated roughly as follows:
\begin{itemize}
\item Let $\Omega$ be a polygonal domain with at most six sides. For $\lambda >0$, the energy $\tilde{F_2}$ is bounded from below by the energy of
a measure supported on a triangular lattice (to be more precise, the energy of a measure supported at the centres of $|\Omega_\lambda|$ unit-area regular hexagons).
\item This lower bound can be achieved in the limit $\lambda \to 0$. (This statement holds for any Lipschitz domain $\Omega$.) It can also be achieved for $\lambda>0$ if $\Omega$ is a periodic domain of the right aspect ratio (meaning that it exactly fits an integer number of unit-area regular hexagons).
\item If the energy of $\nu = \sum_i m_i \delta_{x_i}$ is close to the lower bound, then $\{ x_i \}$ is close to being a triangular lattice.
\end{itemize}

%
%
%
%
%
%
%
%

\section{Conjecture About Minimisers in 3D}
\label{3D}
In experiments on diblock copolymers where one phase has a very low volume fraction, it is observed that the minority phase
forms small spheres embedded in a sea of the majority phase. These spheres are centred on a BCC (body-centred cubic) lattice (see e.g.~\cite{BatesFredrickson}). We conjecture that minimisers of $F_2$ in three dimensions are indeed BCC lattices. There is tantalizing evidence for this. We demonstrated numerically (see Fig.~\ref{CVTs}) that in two dimensions minimisers of $F_2$ are close to being centroidal Voronoi tessellations (CVTs), meaning that the points $\{ x_i \}$ in the support of the optimal measure $\nu$ are close to being the generators of a CVT. We expect the same to be true in three dimensions. It is not known what the optimal CVTs are in three dimensions, where optimal means that they minimise $G$ (defined in \eqref{N.6}), but there is strong numerical evidence to suggest that they are generated by the BCC lattice~\cite{DuWang05}. It has also been proved the lowest energy \emph{lattice} CVT (i.e., the lowest energy CVT generated by a lattice) in three dimensions is generated by the BCC lattice~\cite{BarnesSloane83}.

Therefore we conjecture the following:
 Let $d=3$ and $\nu_\lambda$ be a minimiser of $F_2$. As $\lambda \to 0$, the support $\{ x_i \}_{i=1}^{M_\lambda}$ of $\nu_\lambda$ tends to a BCC lattice.
 Proving this seems out of reach, but it will be studied numerically in a forthcoming paper.

%
%

\bigskip

\noindent{\textbf{Acknowledgments.}} Much of the work of D.~P. Bourne was carried out while he held a postdoc position at the
Technische Universiteit Eindhoven, supported by the grant `Singular-limit Analysis of Metapatterns', NWO grant 613.000.810. The results presented in Section \ref{P} were obtained in collaboration with Florian Theil.

%
%

\bibliography{refsBournePeletierRoper}
\bibliographystyle{plain}

\end{document}